
\documentclass[11pt] {article}      %
\textwidth128mm  \textheight217mm   
\addtolength{\oddsidemargin}{-.2in} 
\addtolength{\evensidemargin}{-.2in}
\addtolength{\topmargin}{-.5in}
%

\usepackage{amsmath,amssymb, amsthm, eucal,bm}  
\usepackage{graphicx, amscd}
\usepackage{framed}

\let\OLDthebibliography\thebibliography
\renewcommand\thebibliography[1]{
  \OLDthebibliography{#1}
  \setlength{\parskip}{3pt}
  \setlength{\itemsep}{0pt plus 0.3ex}
}

\usepackage{txfonts}                  
\usepackage[T1]{fontenc}     

\usepackage{scrextend}     

\usepackage{float}                 
\usepackage{tikz}
\usetikzlibrary{matrix, arrows, calc}

\theoremstyle{plain}
\newtheorem{theorem}{Theorem}[section]            
\newtheorem{proposition}[theorem]{Proposition}  

\theoremstyle{definition}

\numberwithin{theorem}{section}
\numberwithin{equation}{section}
%
\newcommand{\gaction}[2]{\genfrac{}{}{0.5pt}{}{#1}{#2}%
                        \!\lower2pt\hbox{\rotatebox[origin=c]{-90}{{$\looparrowright$}}}}
\newcommand{\dotfraction}[2]{\genfrac{}{}{0.5pt}{}{#1}{#2}%
                        \!\lower.5pt\hbox{{$\circ$}}}




\usepackage{titlesec}                                                           
\titlelabel{\thetitle.\ }                                                         
\titleformat*{\section}{\fontsize{14pt}{14pt} \bf}        
\usepackage{fancyhdr}
\headheight 14pt
\pagestyle{fancy}
\fancyhead{}  \fancyfoot{}
\lhead{\small Jerzy Kocik}
\chead{\small Spinors and Descartes configurations of disks}  
\rhead{\thepage}
\usepackage{sidecap}  
\usepackage[hang,small,sc]{caption}


\def\SO{\hbox{SO}}
\def\SL{\hbox{SL}}
\makeatletter
\newcommand*\bigcdot{\mathpalette\bigcdot@{.41}}
\newcommand*\bigcdot@[2]{\mathbin{\vcenter{\hbox{\scalebox{#2}{$\m@th#1\bullet$}}}}}
\makeatother
\makeatletter
\newcommand*\bbigcdot{\mathpalette\bigcdot@{.61}}
\newcommand*\bbigcdot@[2]{\mathbin{\vcenter{\hbox{\scalebox{#2}{$\m@th#1\bullet$}}}}}
\makeatother
%
%
%

\begin{document}

\title{Spinors and Descartes configurations of disks}
\author{Jerzy Kocik 
    \\ \small Department of Mathematics, Southern Illinois University, Carbondale, IL62901
   \\ \small jkocik{@}siu.edu
}

\date{}

\maketitle

\begin{abstract}
We define spinors for pairs of tangent disks in the Euclidean plane
and prove a number of theorems, one of which may be interpreted as a 
``square root of Descartes Theorem''.
In any Apollonian disk packing, spinors form a network. 
In the Apollonian Window, a special case of Apollonian disk packing,
all spinors are integral. 
\\[5pt]
{\small {\bf Keywords:} Descartes configuration, Descartes formula, Apollonian disk packing, 
spinor, Pytha\-go\-rean triples,
Euclid's parametrization, Minkowski space.}
\\[5pt]
{\small {\bf MSC:} \ 52C26, 51N20, 11E88, 15A66,  51M15}
\end{abstract}

~\\\\

\section{Introduction} 

A {\bf tangency spinor} of an ordered pair of mutually tangent disks 
in the Euclidean plane (identified for convenience with complex plane, $\mathbb R^2 \cong \mathbb C$)  
is a vector (complex number)
\begin{equation}
\label{eq:start}
u \ = \ \pm \sqrt{\frac{z}{r_1r_2}}
\end{equation}
where $z=\overrightarrow{O_1O_2}$ is the vector (complex number) 
joining the centers $O_1$ and $O_2$ of the disks
of radii $r_1$ and $r_2$, respectively. 
The tangency spinor is defined up to a sign.
The concept was introduced  in \cite{jk-c}. 
\\

The definition might look at first somewhat unnatural or arbitrary.
Yet it leads to a number of amazing and fruitful properties, proved in the following sections.  
A quartet of  mutually tangent disks is called a Descartes configuration. 
The present paper proves that spinors  in a Descartes configuration 
admit a choice of signs 
such that these two properties hold
\begin{equation}
\label{eq:laws}
\begin{array}{cc}
    ``\hbox{curl}\; \mathbf u = 0'' :   &\qquad   u_{12} + u_{23} + u_{31} = 0\\[5pt]
    ``\hbox{div}\; \mathbf u = 0'' :   &\qquad   u_{14} + u_{24} + u_{34} = 0\\
\end{array}
\end{equation}
where $u_{ij}$ represents a spinor for  $i$-th and $j$-th tangent disks 
as shown in Figure \ref{fig:divcurl}.

\begin{figure}[h]
\centering
\includegraphics{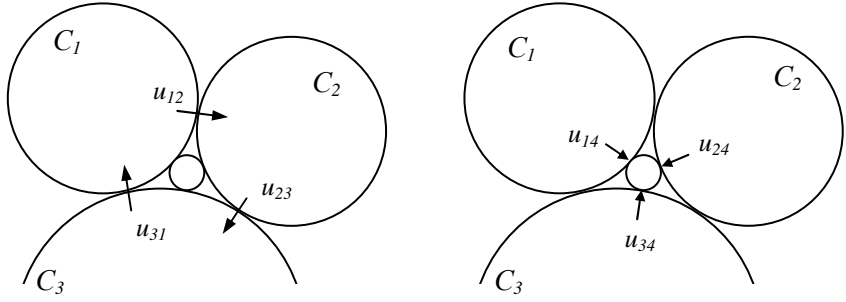}
\caption{Spinor theorems for a Descartes configuration}
\label{fig:divcurl}
\end{figure}
%
%
%
The arrow representation of the spinors in the figures are symbolic and mark only the order of the disks.  
The latter result, $``\hbox{div}\; \mathbf u = 0''$, may be viewed as a ``spinorial'' version of 
Descartes' theorem on circles, which is recalled below.
\\

The  curvatures of four mutually tangent circles (a so-called {\bf Descartes configuration}) 
satisfy Descartes' formula (1643)  \cite{Coxeter,Ped67,Sod}:
\begin{equation}
\label{eq:Descartes1}
                  (A+B+C+D)^2 = 2\; (A^2 + B^2 + C^2 + D^2 )
\end{equation}
The formula was the answer to {\bf Descartes problem}:
Given three mutually tangent circles of curvatures $A$, $B$, $C$,
find the fourth which completes them to the Descartes configuration. 
Due to the quadratic nature of (\ref{eq:Descartes1}), 
there are  two solutions:
\begin{equation}
\label{eq:Descartes1sol}
                  D \ = \ A+B+C \ \pm \  2\; \sqrt{(AB+ BC+ CA}
\end{equation}
As noticed by Boyd, a more convenient version of Descartes formula \cite{Boyd}
is a simple linear equation that uses both solutions, $D$ and $D'$: 
\begin{equation}
\label{eq:Boyd1}
                 D+D' = 2(A+B+C)
\end{equation}

An Apollonian disk packing (or simply ``gasket'')  is a fractal completion of a Descartes configuration. 
Such a packing is called {\bf integral} if the curvatures of all disks are integers.
The curvatures of all disks in the gasket are determined from any four disks by iterative use of (\ref{eq:Boyd1}).
Therefore integrality of the gasket follows from the integrality of the first four disks.
\\

In the Apollonian disk packing, spinors are defined at every point of tangency of two disks.
Figure \ref{fig:AWspinors} shows a the upper half of {\bf Apollonian Window}
 -- an exceptionally regular integral disk packing.
The big numbers represent the disk curvatures.
The arrows shows spinors (for clarity the brackets are omitted). 
Note that all spinors in the Apollonian Window $\mathcal A$ are  integral.


\begin{figure}[h]
\centering
\includegraphics[scale=.82]{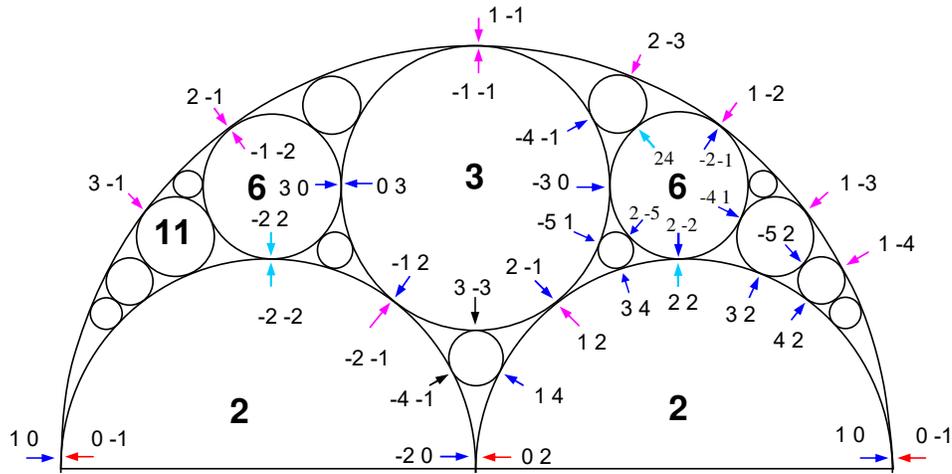}
\caption{\small Integral spinors in the Apollonian Window}
\label{fig:AWspinors}
\end{figure}

The initial motivation for tangency spinors was the discovery that the Apollonian Window 
(and other integral Apollonian disk packings) 
contain Pythagorean triples \cite {jk-c}.
Spinors were  introduced as the geometric representation of their Euclidean parametrizations.
The name ``spinor `' is justified by the fact that the latter may be viewed 
as the spinors for the Minkowski space $\mathbb R^{2+1}$,
quite analogously to spinors of the Minkowski space $\mathbb R^{3+1}$ modeling the physical space-time.
The clarification of these facts is postponed to last section 
for  a better flow of the exposition.

~

The laws \eqref{eq:laws} have local character.  
Extending them to the whole Apollonian gasket 
will be addressed in subsequent papers, where a spinor fiber bundle over 
an Apollonian disk packing will be discussed, 
including topological obstructions to a global extension of the features observed.


\newpage

\section{Spinor theorems for Descartes configurations}

In this section we present 
a sequence of theorems that are for the tangency spinors what Descartes Theorem is for circles.

\subsection{\bf \large Spinor space}

By a {\bf tangency spinor space} we understand the triple 
$\mathbb T = (\mathbb R^2, g, \omega)$, that is a two dimensional Euclidean space with a {\bf scalar product} $g$ 
and a {\bf symplectic} product $\omega$, which for two vectors
$$
u_1 \ = \ \begin{bmatrix}a_1\\b_1\end{bmatrix}
\qquad
u_2 \ = \ \begin{bmatrix}a_2\\b_2\end{bmatrix}
$$
are defined as follows:
\begin{equation}
\label{eq:products}
\begin{array}{ccl}
u_1 \/\bigcdot\/\, u_2  \ &=& \ a_1b_1 + a_2b_2  \\
u_i \times u_2 \ &=& \ a_1b_2 - a_2b_1  = \det \left[\begin{matrix}  a_1 &a_2\\
                                                                      b_1 & b_2  \end{matrix}\right]
\end{array}
\end{equation}
We use an alternative notation for the products:
$$
u_1 \bigcdot\,  u_2  \ \equiv g(u_1,\, u_2 ) 
\qquad \hbox{and}\qquad 
u_1 \times u_2  \ \equiv \omega(u_1,\, u_2 )  
$$
Clearly, $g$  defines a norm while $\omega$ vanishes for a repeated entry:  
$$
   \|\,u\,\|^2 \ =\ u\bigcdot u \,, \qquad\qquad u\times u \ = \ 0 \,.
$$
Moreover, we define a {\bf spinor conjugation} 
\begin{equation}
\label{eq:i}
u \mapsto u' = \left[\begin{matrix}  0 &-1\\
                                                     1 &\phantom{-}0  \end{matrix}\right]
                                                     \left[\begin{matrix}  a\\  b  \end{matrix}\right] = 
                                                    \left[\begin{matrix}  -b\\ \phantom{-}a  \end{matrix}\right] 
\end{equation}
which is essentially a complex strucure defined by the products by
\begin{equation}
\label{eq:gomega1}
u_1\times u_2 =  u'_1 \bigcdot\, u_2  \,.
\end{equation}

~

Alternatively, we may view the spinor space 
as a one-dimensional Hilbert space $\mathbb T \cong \mathbb C$.   \ 
For $u_1 = a_1 + b_1i$  and   $u_2 = a_2 + b_2i$, 
one defines the {\bf Hermitian product} as
$$
                           \bar u_1 u_2    \   = \   \underbrace{(a_1 b_1  +  a_2 b_2)}_{g(u_1,\,u_2)}   
                                                      \  + \   \underbrace {( a_1b_2  -  a_2 b_1)}_{\omega(u_1,\,u_2)} \; i
$$
The above spinor products  of two complex numbers  
are defined in this context as real numbers
$$
\begin{array}{rll}
 		g (u_1,\, u_2)  \          &= \ \ \frac{1}{2} (  \bar u_1u_2  + u_1 \bar  u_2 ) \ &= \ a_1b_1 + a_2b_2  \,, \\[5pt]
 		\omega (u_1,\, u_2)  \ &= \   \frac{1}{2i}\; (\bar u_1 u_2 - u_1 \bar u_2  )    \ &= \ a_1b_2 - a_2b_1   
\end{array} 		
$$
The norm is now $\|\,u\,\|^2 = |u|^2$.
Spinor conjugation \eqref{eq:i} becomes $u \mapsto u' = iu$, and 
relation \eqref{eq:gomega1} becomes $\omega (u_1, u_2) =  g (i u_1, u_2)$.
\\

We are ready to investigate the behavior of spinors in the systems of tangent disks. 
The two formulations, real and complex,  will be used interchangeably.

\subsection{\bf \large Seven theorems}

{\bf Notation:}  In the rest of this paper the name of a circle and its curvature 
will be denoted by the same capital letter.  
Spinors will be denoted by the small letters.
\\

\begin{figure}[h]
\centering
\includegraphics[scale=.75]{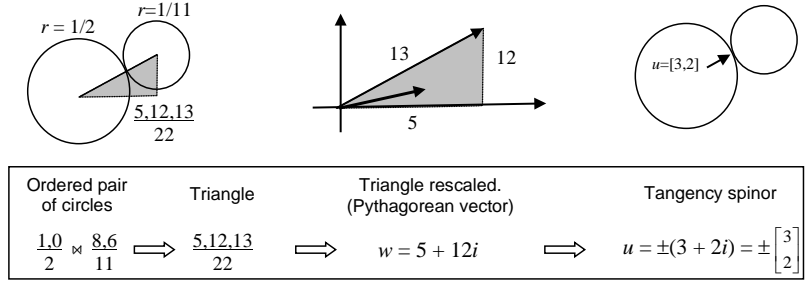}
\caption{\small From two tangent circles to a spinor (not to scale)}
\label{fig:process}
\end{figure}

Recall from the introduction that if $A$ and $B$ are an ordered pair of tangent circles with 
centers $w_1, w_2\in \mathbb C$ and curvatures $A, B\in \mathbb R$, 
respectively, then we define the {\bf spinor of tangency} as a complex number 
$u\in \mathbb C$ such that
\begin{equation}
\label{eq:spinor}
       u^2  =z\, A \, B\,,
\end{equation}
where $z=w_2-w_1$.  Spinors are defined up to a sign: 
$$
       u  = \pm\; \sqrt{z\,A\, B}
$$
Also, recall that the spinor depends on the order of the circles: 
 if $u$ is a spinor for $(A,B)$,  then the spinor for  $(B,A)$  is 
 the spinor conjugated $u' = \pm iu$ .
For the geometric motivation of this definition see the integral 
example shown in Figure \ref{fig:process}.
For more details, see Section \ref{sec:context}.

\begin{theorem}[\bf spinor curl] 
\label{thm:curl}
\ Let $C_1$, $C_2$, and $C_3$ be three mutually tangent circles.  
Then the signs of three spinors of tangency can be chosen so that 
\begin{equation}
\label{eq:curl}
u_1  +  u_2  +  u_3  =  0  \qquad\quad         [ \hbox{\rm curl}\ \mathbf u = 0 ]
\end{equation}
\end{theorem}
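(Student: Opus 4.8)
The plan is to recognize $u_{12}+u_{23}+u_{31}=0$ as a \emph{closure} condition. Each spinor $u_{ij}$ is fixed by \eqref{eq:spinor} only up to sign, i.e.\ only as a line through the origin together with a length $|u_{ij}|$. Choosing the signs so that the three vectors sum to zero is exactly the statement that they are the edge vectors of a (possibly degenerate) triangle; equivalently, that there exist $v_1,v_2,v_3\in\mathbb C$ with $u_{ij}=v_j-v_i$, after which the sum telescopes. Since the three directions are already determined, the only thing to verify is that the three lengths obey the law of sines for the triangle those directions span. I would therefore reduce the theorem to a single length identity together with a consistency check on orientations.

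To compute the directions and lengths I would use the incircle model of the configuration. Assuming first that the three circles are externally tangent, their centers $w_1,w_2,w_3$ are the vertices of a triangle with side lengths $|w_i-w_j|=r_i+r_j$, and the point where $C_i$ and $C_j$ touch is the contact point of the incircle on the corresponding side; the inradius satisfies $\rho^2=r_1r_2r_3/(r_1+r_2+r_3)$. Placing the incenter at the origin, I write the contact points as $\rho\,\eta_{ij}$ with $|\eta_{ij}|=1$ and set $\sigma_k=\rho+i\,r_k$. A short computation then yields
\[
z_{ij}=w_j-w_i=i\,(r_i+r_j)\,\eta_{ij},\qquad \eta_{i,i+1}=\eta_{i-1,i}\,\frac{\sigma_i}{\bar\sigma_i},
\]
indices taken cyclically; the consistency of the second relation around the triangle is $\sigma_1\sigma_2\sigma_3\in\mathbb R$, which is precisely the inradius identity above.

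Feeding this into \eqref{eq:spinor} gives $u_{ij}^2=z_{ij}\,C_iC_j=i\,(1/r_i+1/r_j)\,\eta_{ij}$, so that $|u_{ij}|=\sqrt{1/r_i+1/r_j}$ and $\arg u_{ij}=\tfrac14\pi+\tfrac12\arg\eta_{ij}$ modulo $\pi$. Using the turning relation together with $\tan(\alpha_k/2)=\rho/r_k$ for the triangle angle $\alpha_k$ at $w_k$, the two spinors sharing the index $k$ span lines meeting at an angle whose sine is $\cos(\alpha_k/2)$, and the side opposite is $u_{ij}$ with $\{i,j,k\}=\{1,2,3\}$. The law of sines for the spinor triangle is thus $|u_{ij}|/\cos(\alpha_k/2)=\text{const}$, and on squaring this collapses to the symmetry of
\[
\frac{|u_{ij}|^2}{\cos^2(\alpha_k/2)}=\frac{(r_1+r_2)(r_2+r_3)(r_3+r_1)}{r_1r_2r_3\,(r_1+r_2+r_3)},
\]
whose right-hand side is manifestly invariant under permutations of $r_1,r_2,r_3$. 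Hence the three spinors have exactly the lengths needed to close up.

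I expect the main obstacle to be the bookkeeping of signs and square-root branches: converting the length identity above into an explicit choice of the three $\pm$ signs that makes the vectors sum to zero, rather than merely form a triangle up to reflection. I would resolve this by verifying that the interior angles of the spinor triangle equal $\tfrac12\pi-\tfrac12\alpha_k$ and so sum to $\pi$, which shows the three direction lines bound a genuinely oriented triangle from which the signs are read off by traversing its boundary. A secondary issue is configurations that are not all externally tangent---most notably one circle of negative (oriented) curvature enclosing the other two---where the incircle is replaced by an excircle; I would cover these either by repeating the computation with the signed curvatures in place of the radii, or by continuity from the externally tangent case.
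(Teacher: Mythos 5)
Your argument is correct in its essentials, but it takes a genuinely different route from the paper. The paper's proof is a single algebraic identity: it forms the product $F=(u_1+u_2+u_3)(u_1+u_2-u_3)(u_1-u_2+u_3)(-u_1+u_2+u_3)$, expands it in terms of $u_i^2=z_iC_jC_k$, and shows by brute-force substitution of $r_i=(b+c-a)/2$ etc.\ that $F$ contains the factor $(z_1+z_2+z_3)$, hence vanishes; one of the four sign combinations must therefore be zero. You instead prove closure synthetically: you locate the tangency points as the incircle contact points of the triangle of centers, extract the spinor directions from the turning relation $\eta_{i,i+1}=\eta_{i-1,i}\,\sigma_i/\bar\sigma_i$ and the lengths from $|u_{ij}|^2=C_i+C_j$, and verify the law of sines with angles $\tfrac{\pi}{2}-\tfrac{\alpha_k}{2}$ summing to $\pi$. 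I checked your key identities: $\rho^2=r_1r_2r_3/(r_1+r_2+r_3)$ is exactly $\mathrm{Im}(\sigma_1\sigma_2\sigma_3)=0$, and $|u_{ij}|^2/\cos^2(\alpha_k/2)$ does reduce to the symmetric quantity you display. What your approach buys is geometric insight: it recovers en route the paper's Theorem~\ref{thm:curvs} ($\|u\|^2=A+B$) and essentially the angle data behind Theorem~\ref{thm:curv}, and it explains \emph{why} the triangle closes rather than just that a quartic vanishes. What it costs is the orientation bookkeeping you flag (which does go through, since the three line-angles are each in $(0,\tfrac{\pi}{2})$ and sum to exactly $\pi$) and the case of internal tangency, which you defer to signed radii or continuity; note, though, that the paper's own proof also writes $|z_1|=r_2+r_3$ and so implicitly treats the externally tangent case, so you are no worse off there. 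The one step I would insist you write out fully is the continuity/analytic-continuation argument for the enclosing-disk case, since a sign choice is a discrete datum and does not automatically pass to limits.
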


\begin{figure}[h]
\centering
\includegraphics[scale=.8]{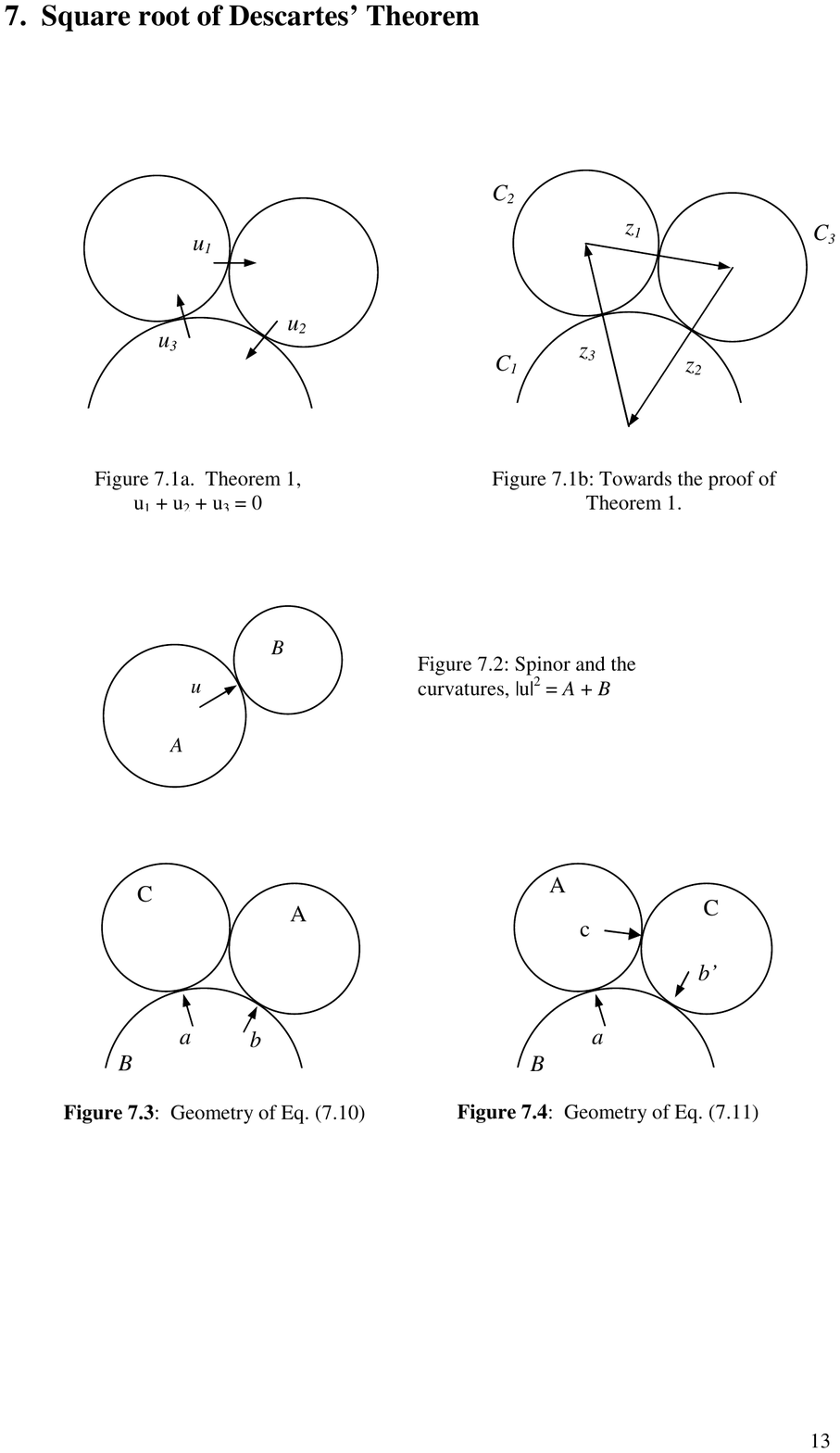}
\caption{Spinor Curl Theorem and its proof}
\label{fig:thmcurl}
\end{figure}

\begin{proof} 
Let $z_1$ , $z_2$ , and $z_3$  be three complex numbers representing 
the vectors joining the centers of the circles (see Figure \ref{fig:thmcurl} right).  Thus
$$
z_1  + z_2  + z_3  =  0
$$
Denote: 
$$
a = |z_1| = r_2 + r_3,  \quad b = |z_2| = r_3 + r_1,  \quad  c = |z_3|  =  r_1 + r_2 .
$$
The radii and curvatures of the circles are determined by these values:
\begin{equation}
\label{eq:defofr}
\begin{array}{rll}     
             r_1 =  (b + c - a)/2    &\quad\hbox{and}\qquad&       	C_1 = 1/r_1 \\
		r_2 =  (c + a -b)/2   &\quad\hbox{and}\qquad&               C_2 = 1/r_2\\
             r_3 =  (a + b - c)/2   &\quad\hbox{and}\qquad&	        C_3 = 1/r_3     
\end{array}
\end{equation}
Following (\ref{eq:spinor}), spinors $u_1$, $u_2$ and $u_2$ are defined by
\begin{equation}
\label{eq:defofu}
\begin{array}{l}          
    u_1^2  \  = \  z_1\, C_2 C_3 \\[3pt]
    u_2^2  \  = \  z_2 \,C_3C_1 \\[3pt]
    u_3^2  \  = \  z_3 \,C_1C_2
\end{array}
\end{equation}
The claim is thus that  there exists a choice of values of $\varepsilon \in \{ -1, +1 \}$ such that
$$
      \varepsilon_1 u_1 + \varepsilon_2 u_2 + \varepsilon_3 u_3  =0
$$
Denote 
\begin{equation}
\label{eq:defofF}
F  =  (u_1 + u_2 + u_3) (u_1 + u_2 - u_3) (u_1 - u_2 + u_3) (-u_1 + u_2 + u_3)
\end{equation}
We need to show that $F$ vanishes.  Expanding the above 
and then using the definition of spinors   (\ref{eq:defofu}) leads to:
$$
\begin{array}{rl}
     F   &=  - u_1^4  - u_2^4 - u_3^4    +   2 u_1^2u_2^2  +  2u_2^2 u_3^2  +  2u_3^2u_1^2  \\[7pt]
          &=  - z_1^2 C_2^2 C_3^2  -  z_2^2 C_3^2 C_1^2  -  z_3^2 C_1^2 C_2^2  +  2 z_1 z_2 C_2 C_3^2 C_1  
                          +  2 z_2 z_3 C_3 C_1^2 C_2   +  2 z_3 z_1 C_1 C_2^2 C_3 \\[7pt]
          &=  - C_1^2C_2^2C_3^2 
                    \; \left[\, z_1^2/C_1^2  +  z_2^2/C_2^2  +  z_3^2/C_3^2  
                                            -  2z_1z_2 /C_1C_2  -  2z_2z_3/C_2C_3  - 2z_3z_1 /C_3C_1\, \right]
\end{array}
$$
But $C_i = 1/r_i$,  thus the part in the bracket may be rewritten:
$$
     F  \ = \ -C_1^2  C_2^2  C_3^2 \cdot   [       z_1^2  r_1^2  +  z_2^2  r_2^2  +  z_3^2  r_3^2  
                                                                -  2z_1  z_2  r_1  r_2 - 2 z_2  z_3  r_2  r_3 - 2 z_3  z_1  r_3  r_1 ]
$$
Substituting (\ref{eq:defofr}) to the above expression and grouping by the products of $a$, $b$, $c$, 
we arrive at:
$$
\begin{array}{rl}
     F  = - \frac{1}{4}\, C_1^2  C_2^2  C_3^2 \ \cdot \!\! &[ \; (a^2 + b^2 + c^2) (z_1 + z_2 + z_3)^2  + \\ [3pt]
                                                                 &\qquad  - 2ab \, [z_3^2 - (z_1 + z_2)^2]  \\[2pt]
                                                                 &\qquad  - 2bc \, [z_1^2 - (z_2 + z_3)^2] \\[2pt]
                                                                 &\qquad  - 2ca \, [z_2^2 - (z_3 + z_1)^2]   \; ]  
\end{array}
$$
One may easily factor out the sum $(z_1 + z_2 + z_3)$:
$$
\begin{array}{rl}
      F  &= -\frac{1}{4} C_1^2 C_2^2 C_3^2
                                 \cdot    \left[  (a^2 + b^2 + c^2) (z_1 +  z_2 +  z_3)^2  + \right. \\
                   &\qquad\qquad\qquad\qquad\qquad   + 2ab\ (z_1 +  z_2 +  z_3)(z_1 +  z_2 -  z_3)  \\
                   &\qquad\qquad\qquad\qquad\qquad   + 2bc\ (z_1 +  z_2 +  z_3)( z_2 +  z_3 - z_1)  \\
                   & \qquad\qquad\qquad\qquad\qquad  \left. + 2ca\ (z_1 +  z_2 +  z_3)( z_3 + z_1 -  z_2)\phantom{{}^{1}} \right]  \\[5pt]
          &= -\frac{1}{4} C_1^2 C_2^2 C_3^2   \  (z_1\! +\!  z_2\! +\!  z_3) \cdot 
                   \left[  (a\! +\! b \!+\! c)^2 \, (z_1\! +\!  z_2\! +\!  z_3)     - 4(abz_3 \!+\! bc z_1\!+\! caz_2) \, \right] \,.
\end{array}
$$
Thus the initial condition  $(z_1 +  z_2 +  z_3) = 0$ implies $F = 0$, which means that one of the factors of $F$ in (\ref{eq:defofF}) must vanish.  
\end{proof}

One may form a converse theorem, which has the spirit of a simple theorem on complex numbers.
It may be stated without the geometric context of circles or disks
and it does not play any role in the rest of this paper.

\begin{theorem}[\bf converse of curl theorem]
\label{thm:curlconverse}
Let $u_1$,  $u_2$, and $u_3$ be three complex numbers such that
\begin{equation}
\label{eq:thm3a}
                                   u_1  +   u_2  +   u_3  =  0
\end{equation}
Then
\begin{equation}
\label{eq:thm3b}
            u_1^2\,g( u_2,  u_3) +  u_2^2 \,g( u_3, u_1) +  u_3^2\, g(u_1,  u_2)   =  0
\end{equation}
\end{theorem}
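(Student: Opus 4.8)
The plan is to argue entirely within the complex (one-dimensional Hilbert space) model of the spinor space, where $g(u_i,u_j)=\tfrac12(\bar u_i u_j+u_i\bar u_j)$, and to read \eqref{eq:thm3b} as a purely algebraic identity in the six quantities $u_1,u_2,u_3$ and $\bar u_1,\bar u_2,\bar u_3$. The only input is the hypothesis \eqref{eq:thm3a}, which also yields its conjugate $\bar u_1+\bar u_2+\bar u_3=0$ for free; both forms will be used.

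First I would clear the factor $\tfrac12$ and expand. Writing $S$ for the left-hand side of \eqref{eq:thm3b} and substituting the Hermitian expression for $g$ gives
$$2S=u_1^2(\bar u_2 u_3+u_2\bar u_3)+u_2^2(\bar u_3 u_1+u_3\bar u_1)+u_3^2(\bar u_1 u_2+u_1\bar u_2),$$
a sum of six monomials, each a product of three of the $u$'s with exactly one conjugated factor. The decisive step is to regroup these six monomials according to which index is conjugated: the two terms carrying $\bar u_1$ combine to $\bar u_1 u_2 u_3(u_2+u_3)$, and cyclically, so that
$$2S=\bar u_1 u_2 u_3\,(u_2+u_3)+\bar u_2 u_3 u_1\,(u_3+u_1)+\bar u_3 u_1 u_2\,(u_1+u_2).$$

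Now the hypothesis enters. Replacing $u_2+u_3$ by $-u_1$, and cyclically, turns each bracket into $-u_i$, giving
$$2S=-|u_1|^2u_2u_3-|u_2|^2u_3u_1-|u_3|^2u_1u_2=-u_1u_2u_3\,(\bar u_1+\bar u_2+\bar u_3),$$
where in the last equality I factor the common product $u_1u_2u_3$ out of each term using $|u_i|^2\,u_ju_k=\bar u_i\,(u_1u_2u_3)$. Since $\bar u_1+\bar u_2+\bar u_3=\overline{u_1+u_2+u_3}=0$ by \eqref{eq:thm3a}, the right-hand side vanishes, hence $S=0$.

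There is no genuine obstacle here: the statement is a short algebraic identity rather than a geometric one. The only point requiring a little care is the bookkeeping in the expansion. The slick ending depends on resisting the temptation to substitute $u_3=-(u_1+u_2)$ immediately (which also works, but produces a messier cancellation of the $u_1^2$- and $u_2^2$-terms) and instead grouping by conjugated index, so that the factorization $-u_1u_2u_3\,\overline{(u_1+u_2+u_3)}$ surfaces and the conjugate form of the hypothesis can be applied in one stroke.
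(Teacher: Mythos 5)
Your proof is correct and follows essentially the same route as the paper's: expand $2S$ using the Hermitian form of $g$, group the six monomials by the conjugated index to get factors $u_i+u_j$, substitute $u_i+u_j=-u_k$ from the hypothesis, and factor out $-u_1u_2u_3$ against $\overline{u_1+u_2+u_3}=0$. No differences worth noting.
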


\begin{proof}
Starting with the left-hand-side expression of (\ref{eq:thm3b}) multiplied by $2$, we have:
$$
\begin{array}{l}
      u_1^2 \,      (       u_2 \bar u_3 +  u_3 \bar u_2)  
                             +   u_2^2 ( u_3 \bar u_1 + u_1 \bar u_3)  
                             +   u_3^2 (u_1 \bar u_2 +  u_2 \bar u_1)   \\[3pt]
      \qquad\quad    =        u_1 u_2 \bar u_3 \, (u_1 +  u_2)  
                                   +   u_3u_1 \bar u_2\, ( u_3 + u_1)  
                                   +   u_2 u_3 \bar u_1\, ( u_2 +  u_3) \\[3pt]
      \qquad\quad      =          u_1 u_2 \bar u_3 (- u_3)  
                                        +   u_3u_1 \bar u_2 (- u_2 )  
                                        +   u_2 u_3 \bar u_1(-u_1) \\[3pt]
      \qquad\quad      =  - u_1 u_2 u_3 \; ( \bar u_3 +  \bar u_2 + \bar u_1)  \\[3pt]
      \qquad\quad      =   0 \, ,
\end{array} 
$$
as stated. 
\end{proof}


It may be somewhat surprising that spinors preserve information about the sizes of circles in a rather simple way:

\begin{theorem} 
\label{thm:curvs}
 If $u$ is the tangency spinor for two tangent circles of curvatures $A$ and $B$  then 
\begin{equation}
\label{eq:thm3}
                     \|\,u\,\|^2 = A + B
\end{equation}
\end{theorem}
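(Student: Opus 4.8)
The plan is to read off the magnitude $\|u\|^2$ directly from the defining relation \eqref{eq:spinor}, namely $u^2 = zAB$, and then interpret the result geometrically via the tangency hypothesis. I would work in the complex model $\mathbb T \cong \mathbb C$, where $\|u\|^2 = |u|^2$. Taking absolute values of both sides of $u^2 = zAB$ and using that the modulus is multiplicative with $|u^2| = |u|^2$, and that $A,B$ are real, gives at once
\[
\|u\|^2 = |u|^2 = |u^2| = |z|\,|A|\,|B| = \frac{|z|}{r_1 r_2},
\]
since $|A| = 1/r_1$ and $|B| = 1/r_2$. This already isolates the entire content of the theorem in the single geometric quantity $|z|$, the distance between the two centers.

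Next I would invoke the tangency condition to evaluate $|z|$. For two externally tangent circles the centers are separated by $|z| = r_1 + r_2$, and substituting this above yields
\[
\|u\|^2 = \frac{r_1 + r_2}{r_1 r_2} = \frac{1}{r_1} + \frac{1}{r_2} = A + B,
\]
which is exactly the claim in this case.

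The one point that needs care --- and the only place I expect any friction --- is the sign convention for curvatures under internal tangency, where one circle encloses the other and is assigned a negative curvature. There $|z| = |r_1 - r_2|$ rather than $r_1 + r_2$, so the computation above produces $\|u\|^2 = |r_1 - r_2|/(r_1 r_2)$, which does \emph{not} equal $1/r_1 + 1/r_2$. The resolution is that the enclosing circle carries the opposite sign: if circle $i$ encloses circle $j$, then $A = -1/r_i$ and $B = +1/r_j$ with $r_i > r_j$, so the signed sum is $A + B = 1/r_j - 1/r_i = (r_i - r_j)/(r_i r_j)$, which matches $|z|/(r_1 r_2)$ precisely. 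I would therefore finish with a brief case check confirming that in every tangency type the signed curvature sum $A + B$ coincides with the manifestly nonnegative quantity $|z|/(r_1 r_2)$, completing the proof.
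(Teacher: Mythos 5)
Your proposal is correct and follows essentially the same route as the paper: take the modulus of the defining relation $u^2 = zAB$, use $|z| = r_1 + r_2$ for tangent circles, and simplify to $A+B$. The only difference is that you carry out the internal-tangency sign check explicitly, which the paper dismisses with ``the signs for inner and outer tangency are easy to verify''; your case analysis is a welcome filling-in of that remark.
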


\begin{proof}
Starting with (\ref{eq:defofu}) we have $| u_2|^2  =  |z||AB|  =  |r_1 + r_2|\/AB  = (1/A + 1/B) AB  =  A + B$.  
The signs for inner and outer tangency are easy to verify.  
\end{proof}

\begin{figure}[H]
\centering
\includegraphics[scale=.8]{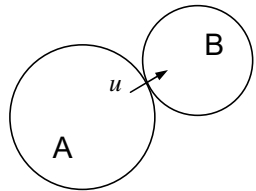}
\caption{Spinor and the curvatures }
\label{fig:u}
\end{figure}

\begin{theorem}[\bf curvatures from spinors]    
\label{thm:curv}
In the system of three mutually tangent circles, the symplectic product of two spinors directed outward from (respectively inward into) one of the circles equals (up to sign) its curvature, e.g., following notation of Figure \ref{fig:thm4}:
\begin{equation}
\label{eq:thm4}
           C \ = \  \pm\; a\times b 
\end{equation}
\end{theorem}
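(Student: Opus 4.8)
The plan is to pass to the complex formulation and realize the symplectic product as an imaginary part, which sidesteps the sign ambiguity of the two square roots at once. Label the circle in question $C$ and its two neighbours $A$ and $B$, with radii $r_A,r_B,r_C$, and let $z_a = w_A-w_C$ and $z_b = w_B-w_C$ be the vectors from the centre of $C$ to the centres of $A$ and $B$; the two spinors directed outward from $C$ are then $a=\pm\sqrt{z_a\,CA}$ and $b=\pm\sqrt{z_b\,CB}$. Since $a\times b=\omega(a,b)=\operatorname{Im}(\bar a\,b)$, I would introduce $w=\bar a\,b$ and compute its square directly from the definition of the spinor:
\[
w^2 \;=\; \overline{a^2}\,b^2 \;=\; \bar z_a\,z_b\,C^2AB .
\]
Writing $\gamma$ for the angle of the triangle of centres at the vertex $C$, we have $|z_a|=r_A+r_C$, $|z_b|=r_B+r_C$, and $\bar z_a z_b=(r_A+r_C)(r_B+r_C)\,e^{i\gamma}$. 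The identities $(r_A+r_C)CA=C+A$ and $(r_B+r_C)CB=C+B$ (both immediate from $r_AA=r_BB=r_CC=1$) collapse the modulus, giving the clean form
\[
w^2 \;=\; (C+A)(C+B)\,e^{i\gamma}.
\]

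From this I would read off $|w|^2=(C+A)(C+B)$ and $\arg w=\gamma/2$, so that
\[
(a\times b)^2 \;=\; \bigl(\operatorname{Im} w\bigr)^2 \;=\; (C+A)(C+B)\,\sin^2\!\tfrac{\gamma}{2}.
\]
It then remains to evaluate $\sin^2(\gamma/2)$. The triangle of centres has sides $r_A+r_B$, $r_B+r_C$, $r_C+r_A$ and semiperimeter $r_A+r_B+r_C$, and the law of cosines at the vertex $C$ together with $1-\cos\gamma=2\sin^2(\gamma/2)$ yields
\[
\sin^2\!\tfrac{\gamma}{2} \;=\; \frac{r_A\,r_B}{(r_A+r_C)(r_B+r_C)} .
\]
Substituting this and again $(C+A)=(r_A+r_C)CA$, $(C+B)=(r_B+r_C)CB$ cancels the adjacent factors and leaves $(a\times b)^2=C^2AB\,r_Ar_B=C^2$, hence $a\times b=\pm C$.

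The evaluation of $w^2$ is essentially free from the definition of the spinor, so the only genuine work — and the step I expect to be the main obstacle — is the trigonometric reduction of $\sin^2(\gamma/2)$ to the ratio $r_Ar_B/\!\bigl((r_A+r_C)(r_B+r_C)\bigr)$ and the bookkeeping that turns $(C+A)(C+B)\sin^2(\gamma/2)$ into exactly $C^2$. I note that the choice of square-root branch and the sign of $\gamma$ are irrelevant, since the statement is only up to sign; that the ``inward'' case follows at once because $\omega$ is invariant under the spinor conjugation $u\mapsto iu$, which carries each outward spinor to the corresponding inward one; and that the configuration above assumes external tangencies — when $C$ surrounds or is surrounded by a neighbour one replaces the relevant $r_i+r_j$ by $|r_i-r_j|$ and tracks the curvature-sign convention, exactly as in the verification of Theorem \ref{thm:curvs}, with the same cancellation producing the result up to sign.
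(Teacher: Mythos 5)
Your argument is correct, but it is a genuinely different proof from the one in the paper. The paper derives \eqref{eq:thm4} structurally from two earlier results: it arranges the three spinors cyclically so that $a+c+b'=0$ (Theorem \ref{thm:curl}), polarizes $\|c\|^2=\|a+b'\|^2$, substitutes $\|u\|^2=A+B$ from Theorem \ref{thm:curvs} to get $A+B=(A+C)+2\,a\bigcdot b'+(B+C)$, and concludes $C=-a\bigcdot b'=b\times a$ via \eqref{eq:gomega1} --- no coordinates, no trigonometry, about four lines. You instead compute everything from the bare definition: $(\bar a b)^2=\bar z_a z_b\,C^2AB$, the law of cosines in the triangle of centres to evaluate $\sin^2(\gamma/2)=r_Ar_B/\bigl((r_A+r_C)(r_B+r_C)\bigr)$, and the cancellation $(C+A)(C+B)\sin^2(\gamma/2)=C^2$. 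I checked the computation and it is right, including the observation that squaring kills both the branch ambiguity and the $\arg w=\gamma/2$ versus $\gamma/2+\pi$ issue, and that $\omega$ is invariant under $u\mapsto iu$ so the inward case is free. What your route buys is independence from the curl theorem (the longest proof in the paper) --- your argument is self-contained and in fact reproves $\|a\bigcdot b\|$-type information along the way; what it costs is that the case analysis for internal tangency ($|z|=|r_i-r_j|$ with a sign convention on curvatures) must be redone by hand here, whereas the paper quarantines that bookkeeping once inside Theorem \ref{thm:curvs} and never touches coordinates again. Your closing remark on internal tangency is stated rather than carried out, so if this were to replace the paper's proof that case would need to be written down; as a check of the theorem for the generic external configuration it is complete.
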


\begin{figure}[h]
\centering
\includegraphics[scale=.8]{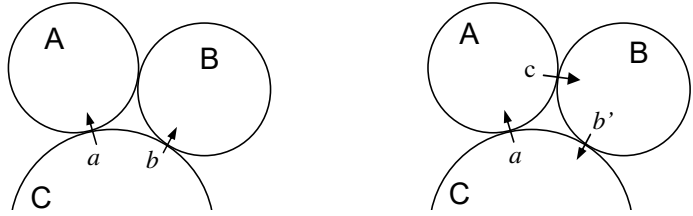}
\caption{Left: Theorem \ref{thm:curv};  Right: its proof}
\label{fig:thm4}
\end{figure}

\begin{proof}
Set three spinors for the three circles in a circular way as in Theorem \ref{thm:curl} 
so that $a + c + b' = 0$ (see Figure \ref{fig:thm4} right).  Consider, 
$$
\begin{array}{rl}
                           \|\, c\,\|^2   &= \  (\,a+b'\,)\bigcdot (\,a+b'\,) \\[7pt]
                                       &=  \  \|\, a\,\|^2 + 2\,a\bigcdot b' + \|\,b'\,\|^2 
\end{array}
$$
Using Proposition \ref{thm:curvs} we get  
$$
A + B  =  (A + C) + 2\, a\bigcdot b' + (B + C) 
$$
Thus 
$$
C = -  a\bigcdot b' = - a \times b = b\times a
$$
due to (\ref{eq:gomega1}).   
Ambiguity of the sign in \eqref{eq:thm4} originates from ambiguity of each of the spinors $a$ and $b$.
For some rectification see Section \ref{sec:pm}.
\end{proof}

Now we move to our main theorem, the theorem that involves all four circles in a Descartes configuration.  
It may metaphorically be called a ``square root of Descartes theorem'': 
\\

\begin{theorem} [\bf The Fundamental Theorem for Tangency Spinors]  
\label{thm:div}
Let $A$, $B$, $C$, and $D$ be four circles in a Descartes configuration.  
\\[3pt]
\begin{addmargin}[1em]{0em}   
{\bf Version A [vanishing divergence]:}  
If $a$, $b$ and $c$ are tangency spinors for pairs $AD$, $BD$ and $CD$  
(see Figure \ref{fig:thmdiv} left),
then their signs may be chosen so that 
\begin{equation}
\label{eq:thm5a}
 					a + b + c = 0 	\qquad	[\hbox{``\rm div}\, \mathbf u = 0'']                                   
\end{equation}
The property holds for both the inward and the outward oriented spinors.
\\[7pt]
{\bf Version B [additivity]:}  If $a$ and $b$ are spinors of tangency for pairs $CA$ and $CB$ 
(see  Figure \ref{fig:thmdiv} right), 
then there is a choice of signs so that the sum 
\begin{equation}
\label{eq:thm5b}
                                      c = a + b
\end{equation}
is a spinor of tangency for $CD$.  
\end{addmargin}
\end{theorem}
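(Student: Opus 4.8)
The plan is to prove Version A in full and then obtain Version B by relabelling. The crucial observation is that in \eqref{eq:thm5a} all three spinors involve the \emph{single} circle $D$, so the statement asserts that three ``spoke'' spinors emanating from one circle sum to zero. This is the same type of assertion as the Curl Theorem, so I would reuse the same device: introduce
\[
F = (a+b+c)(a+b-c)(a-b+c)(-a+b+c),
\]
exactly as in \eqref{eq:defofF}, and show that $F$ vanishes. Since $F$ is the product of the four signed sums $\pm a\pm b\pm c$, its vanishing forces one of them to be zero; because each spinor is defined only up to sign, flipping signs turns that relation into $a+b+c=0$, which is \eqref{eq:thm5a}. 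The inward/outward clause costs nothing: reversing all three orientations replaces $z=w_D-w_X$ by $-z$, hence sends each of $a^2,b^2,c^2$ to its negative, and $F$ depends on the spinors only through these squares.

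Next I would expand $F$ and substitute the defining relation \eqref{eq:spinor}. Writing the centres as $w_A,w_B,w_C,w_D$ and translating so that $w_D=0$ (a translate of a Descartes configuration is again a Descartes configuration), the spinors satisfy $a^2=\pm w_A A D$, $b^2=\pm w_B B D$, $c^2=\pm w_C C D$ with a common sign. The same expansion used in the Curl proof gives
\[
F = (a^2+b^2+c^2)^2 - 2(a^4+b^4+c^4),
\]
and after substitution the common factor $D^2$ pulls out, leaving
\[
F = D^2\Big[(w_A A+w_B B+w_C C)^2 - 2\big(w_A^2 A^2+w_B^2 B^2+w_C^2 C^2\big)\Big].
\]
Thus everything reduces to showing that the bracket vanishes once $w_D=0$.

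That bracket is exactly the complex (``curvature--centre'') form of Descartes' theorem for the translated configuration: for four circles in a Descartes configuration one has $(w_A A+w_B B+w_C C+w_D D)^2 = 2(w_A^2 A^2+w_B^2 B^2+w_C^2 C^2+w_D^2 D^2)$, the complex counterpart of \eqref{eq:Descartes1}. Applying it in the frame $w_D=0$ kills the $D$-terms and leaves precisely the bracket above, so $F=0$ and Version A follows. I expect this complex Descartes identity to be the main obstacle, since --- unlike \eqref{eq:Descartes1} --- it is not assumed in the text; I would either cite it or derive it (from the tangency relations $|w_X-w_Y|=r_X+r_Y$ together with \eqref{eq:Descartes1}, or via an inversive/M\"obius normalisation). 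I would also check the signed-curvature case, in which an enclosing circle carries negative curvature, so that the orientation bookkeeping behind $a^2=\pm w_A A D$ stays consistent.

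Finally, Version B is Version A applied to the common circle $C$ together with the remaining circles $A,B,D$: it produces signs for which $\mathrm{sp}(CA)+\mathrm{sp}(CB)+\mathrm{sp}(CD)=0$, hence $\mathrm{sp}(CD)=-\big(\mathrm{sp}(CA)+\mathrm{sp}(CB)\big)$. Since spinors are defined only up to sign, this says precisely that $a+b$ is a spinor of tangency for $CD$, which is \eqref{eq:thm5b}.
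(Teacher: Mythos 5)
Your argument is essentially correct, but it takes a genuinely different route from the paper, and the difference matters. The paper does not revisit the polynomial $F$ at all: it observes that, by Theorem \ref{thm:curv}, the symplectic products satisfy $a\times b=\pm C$, $c\times a=\pm C$, $c\times b=\pm C$, writes $c=pa+qb$ (legitimate since $a\times b=\pm C\neq 0$ makes $a,b$ a basis of $\mathbb R^2$), and reads off $p,q=\pm 1$ from two of these products. That is a three-line linear-algebra argument, entirely self-contained given the earlier theorems. Your route --- expanding $F=(a^2+b^2+c^2)^2-2(a^4+b^4+c^4)$ in the frame $w_D=0$ and reducing to $(w_AA+w_BB+w_CC)^2=2\big((w_AA)^2+(w_BB)^2+(w_CC)^2\big)$ --- is algebraically sound (your reduction, the sign bookkeeping for orientation reversal, and the deduction of Version B from Version A by relabelling and multiplying by $i$ all check out), but it stands or falls on the curvature--center (``complex'') Descartes identity, which you correctly flag as the main obstacle. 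That identity is not free: it is the content of the Lagarias--Mallows--Wilks extended Descartes theorem \cite{LMW}, and it is at least as deep as the theorem you are proving. Worse, the paper later uses Theorem \ref{thm:div} to \emph{derive} the ordinary Descartes formula (Theorem \ref{thm:proof}, advertised as an alternative proof of Descartes' theorem); since any natural derivation of the complex Descartes identity carries the real one along with it, your version of the Fundamental Theorem would make that subsequent derivation circular. So: if you import \cite{LMW} the proof closes, and it is a nice illustration that the spinor statement is a literal ``square root'' of the complex Descartes relation; but the paper's internal proof via $a\times b=\pm C$ is both shorter and logically cheaper, and is the one compatible with the paper's later use of the theorem.
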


\begin{figure}[h]
\centering
\includegraphics[scale=.8]{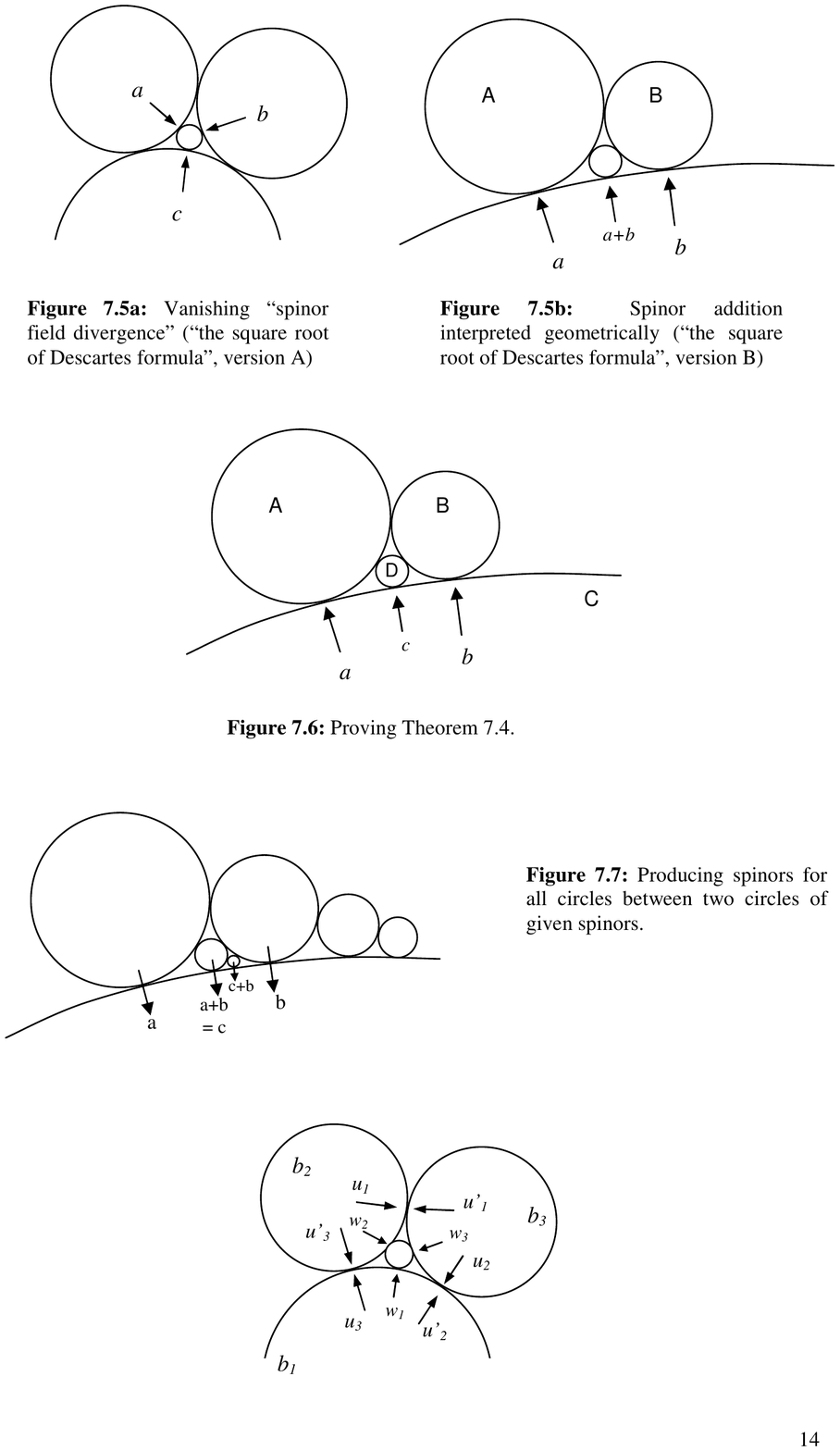}
\caption{(a) vanishing divergence, (b) spinor addition}
\label{fig:thmdiv}
\end{figure}

\begin{proof}
Let $A$, $B$, $C$ and $D$ be four pair-wise tangent circles. 
Let $a$ and $b$ be spinors of tangency for pairs $CA$ and $CB$, respectively.  
We may assume that curvature of one of the circle does not vanish, say $C \not=  0$.
By virtue of Theorem~\ref{thm:curv}, 
we have the values of symplectic products:
$$
\begin{array}{cl}
 			  (i) & \quad   a\times b \ = \ \pm \,C \\
 			 (ii) & \quad   c\times a  \ =  \ \pm \,C \\
 	 		(iii) & \quad   c\times b  \ =  \ \pm  \,C
%
\end{array}
$$
Spinor $c$ for pair $CD$ must be a linear combination of $a$ and $b$, 
say $c = pa + qb$, for some $p, q \in \mathbb R$. 
Substitute it to ({\it ii\/}): 
$$
  \pm\, C  \ =\   c\times a \ =\  (p a + q b)\times a  
                       \ = \  p( a\times a )+ q (b\times a)  
                      \  = \ q\,(b \times a) 
$$
Using now ({\it i\/}), we get   $C   = \pm q C$, or $q=\pm 1$.  
Similarly, applying this argument to ({\it iii\/}) implies $p=\pm 1$..
Thus $c = \pm a  \pm b$,  and what remains is to choose the signs 
of spinors to get the claims of each of the two versions of the theorem.    
\end{proof}

\begin{figure}[H]
\centering
\includegraphics[scale=.7]{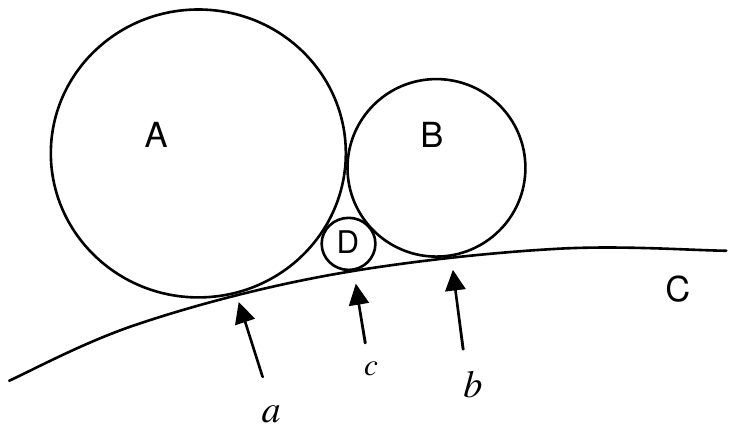}
\caption{Proving Theorem \ref{thm:div}}
\label{fig:provediv}
\end{figure}

\noindent
{\bf Corollary:}
Theorem \ref{thm:div}(B) can be iterated to produce spinors for all circles 
inscribed between two initial circles, see Figure \ref{fig:spiniterate}. 
Inspect the spinors in the disk of curvature 2 or the spinors around the great external circle 
in Figure \ref{fig:AWspinors}.
It has the flavor of Ford's result for relation between fractions and circles drawn on the number axis \cite{For}.
This will be the topic of the subsequent paper.

%

\begin{figure}[h]
\centering
\includegraphics[scale=.8]{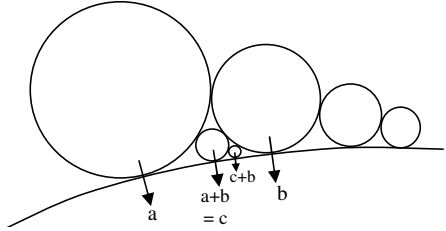}
\caption{Spins to a circle form a Stern-Brocot tree}
\label{fig:spiniterate}
\end{figure}

%


\subsection{\bf\large From spinors to Descartes Theorem}  

The Fundamental Tangency Spinor Theorem \ref{thm:div} contains the Descartes theorem on four tangent circles.
Interestingly, the proof -- although simple -- is not eminently visible.  
Simple squaring the formula leads to nowhere
(the reader is encouraged to try it before reading further). 

\begin{theorem} 
\label{thm:proof}
The Fundamental Tangency Spinor Theorem implies the Descartes Theorem
\end{theorem}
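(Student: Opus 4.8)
The plan is to extract Descartes' formula \eqref{eq:Descartes1} from the vanishing-divergence form of the Fundamental Theorem, Theorem~\ref{thm:div}(A), by reading off \emph{both} the scalar and the symplectic products of the spinors involved and then fusing them through the Lagrange identity of the spinor plane. Concretely, let $A$, $B$, $C$, $D$ be a Descartes configuration and let $a$, $b$, $c$ be the tangency spinors for the pairs $AD$, $BD$, $CD$, all oriented outward from the common circle $D$, with signs chosen so that
$$
a + b + c = 0 \,.
$$

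First I would record the three norms. By Theorem~\ref{thm:curvs}, $\|\,a\,\|^2 = A+D$, $\|\,b\,\|^2 = B+D$, and $\|\,c\,\|^2 = C+D$. Writing $c = -(a+b)$ and expanding $\|\,c\,\|^2 = \|\,a\,\|^2 + 2\,a\bigcdot b + \|\,b\,\|^2$ then yields the scalar product
$$
a \bigcdot b = \tfrac{1}{2}\,(C - A - B - D) \,.
$$
Next, since $a$ and $b$ both emanate outward from $D$, Theorem~\ref{thm:curv} supplies the symplectic product $a \times b = \pm D$, so that $(a\times b)^2 = D^2$.

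The decisive step is the Lagrange (Pythagorean) identity of the spinor plane, which follows at once from the Hermitian product introduced earlier: taking the modulus square of $\bar a\, b = g(a,b) + \omega(a,b)\,i$ gives
$$
\|\,a\,\|^2\,\|\,b\,\|^2 = (a\bigcdot b)^2 + (a\times b)^2 \,.
$$
Substituting the three evaluated quantities turns this into
$$
(A+D)(B+D) = \tfrac{1}{4}(C-A-B-D)^2 + D^2 \,,
$$
and expanding, cancelling the common $D^2$, and collecting symmetric terms leaves
$$
A^2 + B^2 + C^2 + D^2 = 2(AB + AC + AD + BC + BD + CD) \,,
$$
which is exactly $(A+B+C+D)^2 = 2(A^2+B^2+C^2+D^2)$.

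The algebra is routine once the right identity is in hand; the one genuine obstacle---and, I suspect, the reason that naive squaring of $a+b+c=0$ \emph{``leads to nowhere''}---is recognizing that the norm information from Theorem~\ref{thm:curvs} must be combined with the symplectic (curvature) information from Theorem~\ref{thm:curv}, and that the bridge between them is the quadratic identity $\|\,a\,\|^2\|\,b\,\|^2 = (a\bigcdot b)^2 + (a\times b)^2$ rather than any manipulation of the linear relation itself. A minor secondary point to check is that the sign ambiguities in $a\bigcdot b$ and $a\times b$ are harmless, since both quantities enter the final identity only through their squares.
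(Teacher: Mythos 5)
Your proof is correct and follows essentially the same route as the paper's: compute $a\bigcdot b$ once from the linear spinor relation together with Theorem~\ref{thm:curvs}, and once from the Lagrange identity $\|a\|^2\|b\|^2=(a\bigcdot b)^2+(a\times b)^2$ together with Theorem~\ref{thm:curv}, then equate and expand. The only (immaterial) difference is that you base the spinors at $D$ via Version~A of Theorem~\ref{thm:div}, whereas the paper bases them at $C$ via Version~B, arriving at the equivalent intermediate identity $4(AB+BC+CA)=(D-A-B-C)^2$.
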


\begin{proof}
We shall use the well-known relation
\begin{equation}
\label{thm:calc3}
|\, \mathbf u\bigcdot \mathbf v \,|^2 \ + \ |\, \mathbf u\times \mathbf v \,|^2 \ = \  \|\,\mathbf u\,\|^2\|\, \mathbf v\,\|^2
\end{equation}
which is the Pythagorean Theorem $\cos^2\theta + \sin^2\theta = 1$ in disguise, 
known for 3D vector calculus and valid in 2D.  
Let us start with configuration and notation as in Figure \ref{fig:provediv}.
$$
a+b=d \quad\Rightarrow\quad  \|d\|^2 = \|a\|^2 + 2\,a\bigcdot b  + \|b\|^2 
$$
This gives the meaning of the inner product: 
\begin{equation}
\label{eq:P1}
 \begin{array}{rl}
 2\, a\bigcdot b &\ = \ \|d\|^2 - \|a\|^2 - \|b\|^2    \\[3pt]
               &\ = \ C+D-(A+C)  - B+C)        \\[3pt]
               &\ = \ D-A - B-C        
\end{array}                                                                              
\end{equation}
On the other hand, we may calculate the scalar product using \eqref{thm:calc3}:
\begin{equation}
\label{eq:P2}
\begin{array}{rl} 
|a\bigcdot b|^2 &\ =\  \|a\|^2 \|b\|^2 -(a\times b)^2\\[3pt]
                   &\ = \ (A+C )(B+C)-C^2\\[3pt]
                   &\ = \ AB + BC + CA
                   \end{array}
\end{equation}
Now, equating the scalar product of two equations, we get
\begin{equation}
\label{eq:last}
4(AB +BC +CA) ^2 = (D-A-B-C)^2
\end{equation}
This, after expanding and regrouping, is the Descartes Formula \eqref{eq:Descartes1}.
\end{proof}

\noindent
{\bf Remark:} The above theorem and its proof may be seen as an alternative proof of Descartes Theorem.
\\

\noindent
{\bf Remark:} Notice that without much ado one gets the ``solution'' \eqref{eq:Descartes1sol}
 to the quadratic equation directly from the last equation of the proof:
$$
D \ = \ A+B+C \ \pm \ 2\sqrt{AB +BC +CA}
$$

~

Since the cross product of two spinors from $C$ to $A$ and $B$ has a geometric meaning 
of the curvature of the disk $C$, 
a natural question arises if the dot product has a meaning too.

\begin{theorem}[\bf Mid-circle from spinors]     
\label{thm:inner}
In the system of three mutually tangent circles, the scalar product of two spinors directed outward from 
(or inward to) one of the circles to the other two equals (up to sign) 
the curvature of the circle that passes through the points of tangency between the three circles
(see Figure \ref{fig:thm5}):
\begin{equation}
\label{eq:thm5}
           a\bigcdot b \ = \ \pm K
\end{equation}
\end{theorem}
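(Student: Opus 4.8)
The plan is to split the claim into an algebraic evaluation of $a\bigcdot b$ and an independent geometric evaluation of the curvature $K$, and then to match the two. The algebraic half is essentially free from what has already been assembled, so the real content is the geometry.

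For the algebraic side I would reuse the identities established for the Fundamental Theorem. Since $a$ and $b$ are the spinors of the pairs $CA$ and $CB$ issued from $C$, Theorem~\ref{thm:curvs} gives $\|a\|^2 = A+C$ and $\|b\|^2 = B+C$, while Theorem~\ref{thm:curv} gives $a\times b = \pm C$. Substituting these into the Lagrange-type identity \eqref{thm:calc3},
$$
|a\bigcdot b|^2 = \|a\|^2\,\|b\|^2 - (a\times b)^2 = (A+C)(B+C) - C^2 = AB+BC+CA,
$$
which is exactly the computation of \eqref{eq:P2}. Hence $a\bigcdot b = \pm\sqrt{AB+BC+CA}$, and the whole theorem reduces to the purely geometric assertion that the curvature of the circle through the three points of tangency equals $\sqrt{AB+BC+CA}$.

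The key geometric observation I would invoke is that this circle is precisely the incircle of the triangle $O_1O_2O_3$ spanned by the three centers. Writing $\ell_1=r_2+r_3$, $\ell_2=r_3+r_1$, $\ell_3=r_1+r_2$ for the side lengths (with $\ell_i$ opposite $O_i$) and $s=r_1+r_2+r_3$ for the semiperimeter, the incircle touches the side $O_1O_2$ at distance $s-\ell_1=r_1$ from $O_1$ --- which is exactly the contact point of circles $1$ and $2$. The same holds on the other two sides, so the incircle passes through all three contact points; as three non-collinear points determine a unique circle, it coincides with the circle in the statement. Its curvature is then immediate from Heron's formula: the area is $\sqrt{s(s-\ell_1)(s-\ell_2)(s-\ell_3)}=\sqrt{(r_1+r_2+r_3)\,r_1r_2r_3}$, so the inradius is $\rho=\mathrm{Area}/s=\sqrt{r_1r_2r_3/(r_1+r_2+r_3)}$ and
$$
K^2=\frac{1}{\rho^2}=\frac{r_1+r_2+r_3}{r_1r_2r_3}=\frac{1}{r_2r_3}+\frac{1}{r_3r_1}+\frac{1}{r_1r_2}=AB+BC+CA.
$$

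Comparing with the algebraic value yields $a\bigcdot b=\pm K$, as claimed; the sign is immaterial since each spinor is defined only up to sign, and $(a\times b)^2=C^2$ kills the sign ambiguity of Theorem~\ref{thm:curv} in the first computation. I expect the main obstacle to lie in the geometric identification rather than in any calculation: recognizing the contact circle as the incircle of the triangle of centers is the insight that unlocks Heron's formula, and one should separately check that the argument degenerates gracefully --- or be recast via inversion --- in the limiting cases where one of the ``circles'' is a straight line or is internally tangent, so that the triangle of centers is not the plain externally-tangent one.
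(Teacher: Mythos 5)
Your proposal is correct, and its spinor half coincides with the paper's proof: both reduce $a\bigcdot b$ to $\pm\sqrt{AB+BC+CA}$ via the identity \eqref{thm:calc3} together with $\|a\|^2=A+C$, $\|b\|^2=B+C$ and $a\times b=\pm C$, exactly the computation recorded in \eqref{eq:P2}. The only divergence is in the geometric half: the paper simply invokes ``the well-known formula for the mid-circle'' $K=\sqrt{AB+BC+CA}$, whereas you actually prove it by identifying the circle through the three tangency points with the incircle of the triangle of centers (the tangent length from $O_i$ being $s-\ell_i=r_i$) and then applying Heron's formula to get $\rho=\sqrt{r_1r_2r_3/(r_1+r_2+r_3)}$. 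This makes your argument more self-contained than the printed one, and your closing caveat is apt: the incircle picture genuinely presupposes three externally tangent proper circles, a case distinction the paper's appeal to the general mid-circle formula quietly sidesteps.
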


\begin{proof}
Choose the signs of the spinors between the disks so that
$$
a+b=c
$$
Norm-squaring both sides gives result that appeared in \eqref{eq:P2}
$$
a\bigcdot b \ = \  \pm \sqrt{AB+BC+CA} \,,
$$
which is the well-known formula for the mid-circle,
a circle that is orthogonal to each of them, $K\bot A$, $K\bot B$, and $K\bot C$.
\end{proof}

\begin{figure}[h]
\centering
\includegraphics[scale=.8]{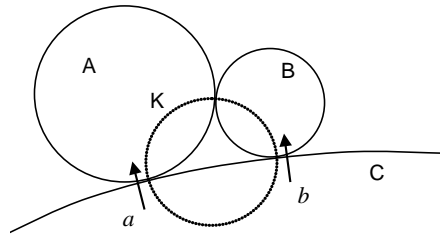}
\caption{Spins to a circle form a Stern-Brocot tree}
\label{fig:thm5}
\end{figure}

\noindent
{\bf Corollary:}  Equation \eqref{eq:last} in the proof of Theorem \ref{thm:proof} 
provides another formula for the mid-circle
$$
K \ = \  \pm\,\frac{1}{2}(A+B+C-D)\,,
$$
where $D$ is any of the two solutions of the Descartes problem for $A$, $B$. and $C$.

\subsection{\bf Topological obstructions}
\label{sec:pm}

The fact that the Descartes formula provides two solutions for a given triplet of mutually tangent disks
is in agreement with the sign ambiguity of spinors.
We shall now ``tame'' this ambiguity.
\\

\noindent
{\bf Definition:}  Referring to Figure \ref{fig:parallel} left, 
we say that signs of spinors $a$ and $b$ are {\bf harmonized} over $D$ (or spinor $c$)
if 
one may choose the sign of $d$ so that 
\begin{equation}
\label{eq:ab}
 a+b =d
 \end{equation}
(as opposed to $a-b$). 
In such a case we shall say that the sign of $a$ is {\bf parallelly transported} to $b$
along the arc of $C$ that passes through the point of tangency 
of $C$ with $D$ (in short: over the arc of $C$  through  $D$).

~

\noindent
{\bf Corollary:}
If spinors in Theorem \ref{thm:curv}, are harmonized 
in the counter-clock direction of the disk they leave 
then  the Formula \eqref{eq:thm4} of 
becomes  
$$
 C \ = \  +\, b\times a 
$$
(counter-clock convention).  
Changing the sign of any of the two spinors (or, equivalently, changing of the order in the product)
corresponds to harmonizing over the complementary arc through the other Descartes solution, $D'$, to the Descartes problem 
for the triplet $A,\;B,\;C$.
\\

\begin{figure}[h]
\centering
\includegraphics[scale=.8]{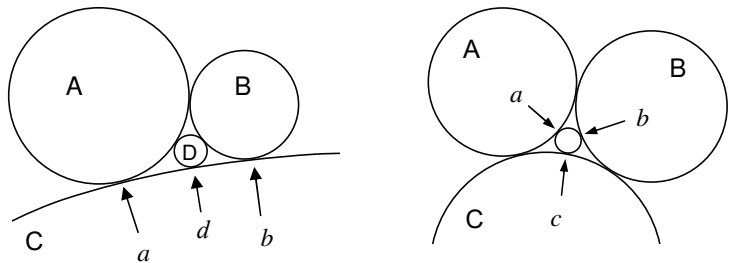}
\caption{Left:  Definition of parallel transport of signs.  Right: Impossibility of harmonizing the spinors
in the Descartes configuration}
\label{fig:parallel}
\end{figure}

The signs of the spinors in an Apollonian gasket cannot be arranged globally 
so that the properties of the Fundamental Theorem hold for each triple of mutually
tangent disks.
The problem starts already with Descartes configuration.
Indeed, consider the Descartes configuration as the one in Figure \ref{fig:parallel}, right.
Suppose one may harmonize the spinors $a$, $b$ and $c$
along the circle in the middle, clockwise.
We can do it in three steps: 
$$
\begin{array}{crl}
(i) & \quad\hbox{transport the sign from $a$ to $c$ over $B$:}  &\quad  b = a+c \\[3pt] 
(ii) & \quad \hbox{transport the sign from $c$ to $b$ over $A$:}  &\quad  a =b+c \\[3pt] 
(iii)&  \quad \hbox{transport the sign from $b$ to $a$ over $C$:}  &\quad  c=  a+b 
\end{array}
$$
Now, add ({\it i}\/) and ({\it iii}\/) side-wise:
$$
              b+c = b+ c + a+a
$$
which would make sense if one of the $a$'s were a negative of the other.  
This is an indication that transfer of $a$ around the inner circle changes its sign.
Performing the transfer around twice would restore the original sign.
\\
 
This imitates the spinors describing the spin of electrons, which change the sign after a single rotation of electron.
The phenomenon corresponds to the double covering of the group SU(2) over SO(3), 
while the group-theoretic analogue for tangency spinors corresponds to the double cover
$$
\SL(2,\mathbb R)  \ \xrightarrow{\quad 2:1\quad} \ \SL(2,,\mathbb R)  \,.
$$

For numerical examples, consult Figure \ref{fig:AWspinors}.  
The spinors in the disk of curvature 2 are harmonized.
But extending the list of the spinors to the bottom part brings surprise in change of sign of 
the spinor at the starting point 
But this cannot be extended to the whole circle.

\section{Context and clarifications}  
\label{sec:context}

In this section we review some facts about geometry of circles, Apollonian disk packings and Pythagorean triples
in order to provide a wider context to the results of the previous section,
 and to hint towards some generalizations.

\subsection{\bf Apollonian Window, Pythagorean triples, and Euclidean parametrization}

Integral Apollonian disk packings 
attract considerable attention due to their rich number-theoretical content
\cite{Hir,GLM1,GLM4,LMW,Man,Mel}.
Figure~\ref{fig:S32} shows one that is particularly graceful -- the {\bf Apollonian Window};
denote it $\mathcal A$.
{\bf Symbols} that label some of the circles have the following meaning:
curvatures (reciprocals of radii) are indicated in the denominators 
while the positions of the centers may be read off by interpreting the symbol 
as a pair of fractions \cite{jk-c}.
For example, here is how you decode one of them:
$$
\hbox{\sf symbol:\  } 
       \frac{3,\;4}{6}  
                         \qquad \Longrightarrow \qquad 
       \begin{cases}     \hbox{\sf radius:\  } &  r = \frac{1}{6} \\
                                  \hbox{\sf center:\ }  & (x,\,y)  =   \left(\frac{3}{6},\, \frac{4}{6}\right)
                                                                    =   \left(\frac{1}{2},\, \frac{2}{3}\right)
       \end{cases}
$$
The two numbers in the numerator will be called the {\it reduced coordinates} 
of the a disk's center, typically denoted by dotted symbols, as $(\dot x, \, \dot y)$.
What makes the Apollonian Window special is the fact that all of its symbols have integer entries.

\begin{figure}[h]
\centering
\includegraphics[scale=.8]{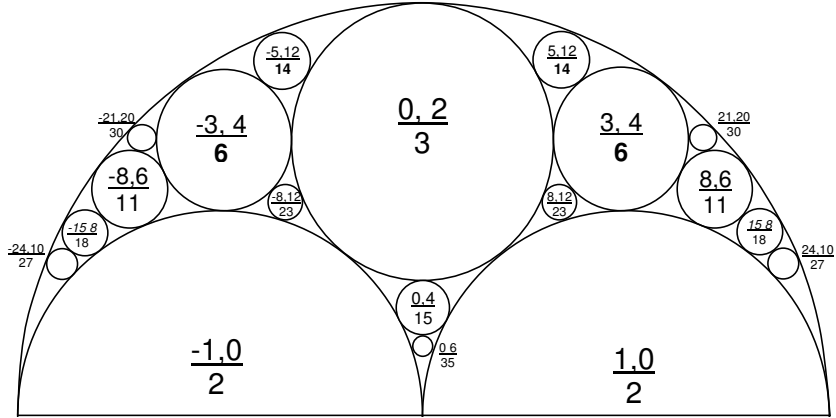}
\caption{Symbols in the Apollonian Window.  Only the upper half is shown.}
\label{fig:S32}
\end{figure}

The key tool for creating the data for the disks is the linearized version
of Descartes'  Formula \eqref{eq:Boyd1} :
\begin{equation}
\label{eq:Boyd}
                 D+D' = 2(A+B+C)
\end{equation}
Interestingly, the same formula holds for the corresponding reduced 
coordinates $(\dot x, \dot y)$
(consult \cite{jk-d}).
Since an Apollonian gasket is a fractal completion of a Descartes configuration 
and all symbols may be derived with (\ref{eq:Boyd}) given the first four, 
integrality of the symbols in $\mathcal A$ follows from the integrality of the initial four 
disks (or actually any four disks in Descartes configuration in the gasket).

\begin{figure}[h]
\centering
\includegraphics[scale=.8]{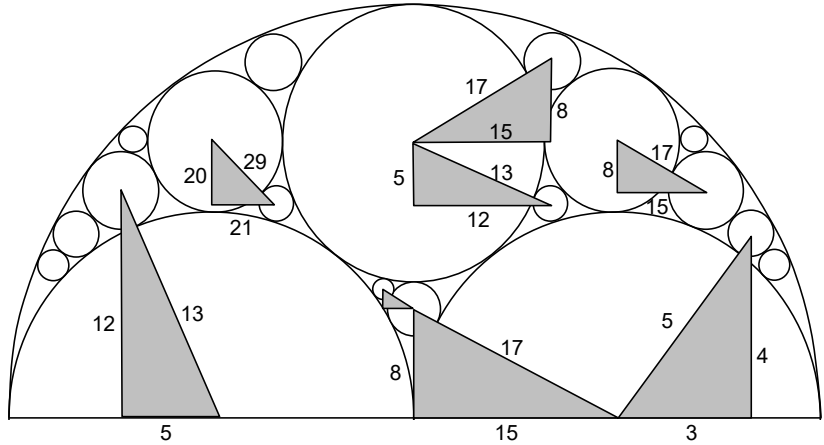}
\caption{Pythagorean triples in the Apollonian Window}
\label{fig:AWtriangles}
\end{figure}

Every pair of tangent circles in $\mathcal A$ defines a triangle
with sides proportional to a Pythagorean triple as follows:
\begin{equation}
\label{eq:product}
     \frac{\dot x_1, \;  \dot y_1}{ \beta_1}
\ \Join  \
     \frac{\dot x_2,\; \dot y_2}{ \beta_2}
\quad = \quad
       \left[\begin{matrix}
                     a \\
                     b \\
                    c
     \end{matrix}\right]
\equiv 
  \left[\begin{matrix}
                     \beta_1 \dot x_2 - \beta_2 \dot x_1 \\
                     \beta_1 \dot y_2 - \beta_2 \dot y_1 \\
                    \beta_1+\beta_2
     \end{matrix}\right]
\end{equation}
where $a^2+b^2=c^2$ is easy to check. 
Figure \ref{fig:AWtriangles} shows some of the triples in $\mathcal A$.
Note that the Pythagorean triples \eqref{eq:product} are integral.
The actual triangle in the figure has its size scaled down by the factor of $\beta_1\beta_2$.

The next step is to recall that Pythagorean triangles admit Euclidean parameters 
\cite{Sie,TT} that determine them via: 
$$
u=[m,n] \quad \mapsto \quad (a,b,c) = (m^2-n^2,\ 2mn, \ m^2+n^2)
$$
As explained in  \cite{jk-c}, Euclidean parameters can be viewed as a {\it spinor}.
Indeed, a Pythagorean triple, satisfying $a^2+b^2-c^2=0$, may be viewed as a null-vector
of Minkowski space $\mathbb R^{2,1}$ and
as such, it may be represented as the tensor square of a spinor 
from the associated two-dimensional spinor space $\mathbb R^2$.  
Fortunately, for the purposes of this paper it suffices to represent the spinor 
as a complex number $u\in\mathbb C\cong\mathbb R^2$
 via identification $[m, n] \equiv m+ni$. 
The corresponding Pythagorean triple is defined by squaring:
$$
            u=m+ni \quad \to \quad u^2  \ = \  (m^2-n^2) + 2mn\, i  \ = \  a+bi   
$$
with $c=|u^2|=m^2+n^2$.  Clearly, the spinor is defined up to a sign, since $(-u)^2=u^2$.
\\

\begin{figure}[H]
\centering
\includegraphics[scale=.7]{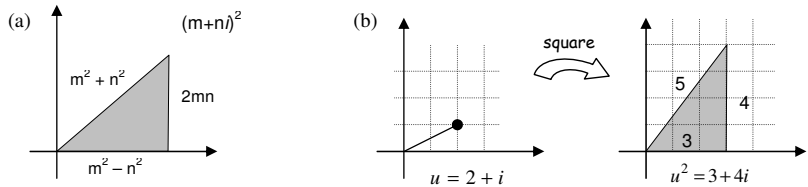}
\caption{\small Euclidean parameterization as a spinor}
\label{fig:square}
\end{figure}

And this is the initial inspiration for introducing the concept of tangency spinors.
Reconstructing a number of spinors via these Pythagorean triples, as in Figure  \ref{fig:AWspinors}
reveals a number of  features visually detectable by inspection;
realization that they start with the Descartes configuration lead to the present paper.


~\\


\begin{figure}[h]
\centering
\includegraphics[scale=.8]{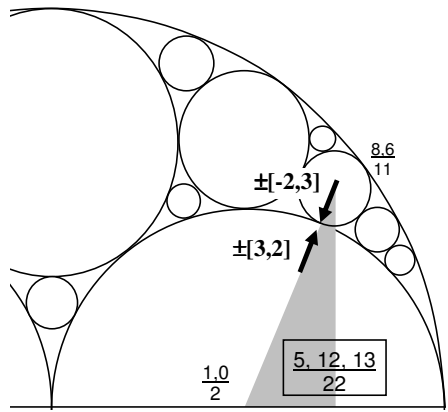}
\caption{Spinors in the Apollonian Window}
\label{fig:21}
\end{figure}
\subsection{\bf Minkowski space}

By $\mathbb R^{n+1;1}$ we denote Minkowski space with a standard {\bf isotropic} basis, 
that is Cartesian space $\mathbb R^{n+2}$ with column vectors 
\begin{equation}
\label{eq:vectorsdot}
\mathbf v\ = \
   \left[\begin{matrix}
   \xi_1\\
   \xi_2\\
   \vdots\\
   \xi_n\\
   \beta\\
   \gamma
   \end{matrix}\right]_{\bbigcdot}
\ = \
   \left[\begin{matrix}
   \bm\xi \\
   \beta\\
   \gamma
   \end{matrix}\right]_{\bbigcdot}
\end{equation}
and with the norm defined by
$$
|\mathbf v |^2 = -\xi_1^2-\ldots -\xi_n^2 +\beta\gamma  
$$
The subscript dots in \eqref{eq:vectorsdot} indicate that the isotropic basis in use.
The matrix associated with this quadratic form is
$$
g=
\left[\begin{matrix}
-\mathbf{I} & 0 & 0\\
0 & 0 & 1/2\\
0 & 1/2 & 0\\
\end{matrix}\right]
$$
and Minkowski scalar product of two vectors $\mathbf v$ and $\mathbf v'$ is
\begin{equation}
\label{eq:inner}
         \langle \mathbf v ,\; \mathbf v' \rangle 
                    = \mathbf v^T  g\, \mathbf v' 
                    = -  \xi_1\xi'_1 \ \dots -  \xi_n\xi'_n 
                        +\frac{1}{2}\; \beta\gamma' + \frac{1}{2}\;  \beta'\gamma
\end{equation}
(This is a space of signature (n+1,1) as a simple change of basis would show).
We preserve the symbol $\mathbb R^{m,1}$ ({\it comma} versus {\it semicolon}) 
for the Minkowski space with the standard {\bf orthonormal} basis in which the metric given by matrix
$$
g=
\left[\begin{array}{rr}
-\mathbf I & \;0 \; \\
0 & \;1 \;
\end{array}\right]
$$

Let us introduce a nonlinear ``cross-product'' in the Malinowski space:

\begin{proposition}
\label{thm:basic}
Define a product 
$$
      \Join: \  \mathbb R^{n+1;1} \times \mathbb R^{n+1;1} \longrightarrow \ \mathbb R^{n+1;1}
$$
as 
$$
\mathbf v=
     \left[\begin{matrix}
        \bm\xi\\
       \beta\\
       \gamma
      \end{matrix}\right]_{\bbigcdot}
\  , \ 
\mathbf v'=
     \left[\begin{matrix}
        \bm \xi'\\
        \beta'\\
        \gamma'
      \end{matrix}\right]_{\bbigcdot}
\qquad \longrightarrow \qquad
      \mathbf v\Join \mathbf v'
      =           \left[\begin{matrix}
                      \beta \bm\xi' - \beta' \bm\xi\\
                      \beta+\beta'\\
                      \beta+\beta'
     \end{matrix}\right]_{\bbigcdot}
$$
Then if  \  $|\mathbf v|^2 = |\mathbf v'|^2=-1$ and $\langle \mathbf v, \mathbf v'\rangle = 1$ 
then $\mathbf v\Join \mathbf v'$ is a nul vector, $|\mathbf v\Join \mathbf v'|^2 = 0$.
\end{proposition}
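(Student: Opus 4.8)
The plan is to verify the null condition by a direct computation in the isotropic basis. Write $\mathbf w = \mathbf v \Join \mathbf v'$; its spatial block is $\bm\eta = \beta\bm\xi' - \beta'\bm\xi$ and both of its last two entries equal $\beta+\beta'$. By the definition of the norm in the isotropic basis,
$$|\mathbf w|^2 = -|\bm\eta|^2 + (\beta+\beta')^2,$$
where $|\bm\eta|^2 = \eta_1^2 + \cdots + \eta_n^2$ is the ordinary Euclidean square of the spatial block. Thus the entire statement reduces to showing that $|\bm\eta|^2 = (\beta+\beta')^2$ under the three hypotheses.

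First I would expand the Euclidean square of the spatial block,
$$|\bm\eta|^2 = \beta^2\,|\bm\xi'|^2 - 2\beta\beta'\,(\bm\xi\cdot\bm\xi') + \beta'^2\,|\bm\xi|^2,$$
writing $\bm\xi\cdot\bm\xi' = \xi_1\xi_1' + \cdots + \xi_n\xi_n'$ for the Euclidean dot product of the spatial parts. The next step is to eliminate the three Euclidean quantities $|\bm\xi|^2$, $|\bm\xi'|^2$, and $\bm\xi\cdot\bm\xi'$ in favour of $\beta,\gamma,\beta',\gamma'$ by means of the hypotheses. The conditions $|\mathbf v|^2 = |\mathbf v'|^2 = -1$ give $|\bm\xi|^2 = \beta\gamma + 1$ and $|\bm\xi'|^2 = \beta'\gamma' + 1$, while $\langle \mathbf v, \mathbf v'\rangle = 1$ together with \eqref{eq:inner} gives $\bm\xi\cdot\bm\xi' = \tfrac12\beta\gamma' + \tfrac12\beta'\gamma - 1$.

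Substituting these three expressions into the expansion of $|\bm\eta|^2$ and comparing with $(\beta+\beta')^2 = \beta^2 + 2\beta\beta' + \beta'^2$, I expect every term to cancel: the two cubic contributions $\beta^2\beta'\gamma'$ and $\beta\beta'^2\gamma$ each appear twice with opposite signs, and the surviving quadratic terms reproduce exactly $\beta^2 + 2\beta\beta' + \beta'^2$. Hence $|\bm\eta|^2 = (\beta+\beta')^2$, so $|\mathbf w|^2 = 0$ and $\mathbf v\Join\mathbf v'$ lies on the light cone, as claimed.

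There is no genuine conceptual obstacle; the argument is a short, closed computation. The only point demanding care is the bookkeeping of the factors $\tfrac12$ in the isotropic scalar product \eqref{eq:inner} and of the signs, which is precisely where a discrepancy would surface if the normalizations $-1$ and $1$ in the hypotheses were altered. It is worth emphasizing that the cancellation mechanism is the same Pythagorean identity that underlies \eqref{eq:product}: the product $\Join$ is constructed so that the mixed cubic terms annihilate, leaving a perfect square, which is exactly what forces the output to be isotropic.
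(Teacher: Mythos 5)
Your computation is correct and is exactly the "direct calculation" the paper invokes without writing out: the hypotheses convert $|\bm\xi|^2$, $|\bm\xi'|^2$, and $\bm\xi\cdot\bm\xi'$ into expressions in $\beta,\gamma,\beta',\gamma'$, the cubic terms cancel, and $|\bm\eta|^2=(\beta+\beta')^2$ kills the norm. No discrepancy with the paper's (unwritten) argument.
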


\begin{proof}
Direct calculations.
\end{proof}
The equality of the isotropic components imply that 
the image of this map lies in the subspace of $\mathbb R^{n+1;1}$ 
that is isomorphic to the standard $(n+1)$-dimensional Minkowski space $\mathbb R^{n,1}$, 
thus the above map may be restricted to 
$$
       \mathbb R^{n+1;1} \times \mathbb R^{n+1;1}  \ \longrightarrow \ \mathbb  R^{n,1}_0 
$$
where $\mathbb R^{n,1}_0$ denotes the cone of null vectors in $\mathbb R^{n,1}$.
As is well known in the theory of representations of Clifford algebras \cite{Por,Por2,Lou}, 
the null vectors may be represented as "tensor squares"
of the spinors in the spinor space $\mathbb S$ corresponding to a spin representation of $\SO(n,1)$: 
$$
v = u^*\otimes u 
$$
where the star denotes an appropriate Hermitian conjugation in $\mathbb S$.  
(Consult \cite{Por} for the constructive way to determine $\mathbb S$ for a particular dimension).
More precisely, there exist a natural bilinear map $B:\mathbb S\otimes\mathbb S \to \mathbb  R^{n,1}$ 
defined up to a real scalar. 
On the diagonal entries it takes values in the light cone  $R^{n,1}_0$.
Effectively, we have a composition of maps 
\begin{equation}
\label{eq:maps}
        \mathbb R^{n+1;1} \times \mathbb R^{n+1;1}  \ \to \ \mathbb  R^{n,1}_0 \ \to \ \mathbb S
\end{equation}
where the last map 
is defined up to normed factor $z\in \mathbb F$ , $|z|^2=1$, 
where $\mathbb F$ is the field/algebra  
underlying the spinor space, $\mathbb F = \mathbb R,\, \mathbb C,\, \mathbb H$
(real numbers, complex numbers, quaternions and octonions). 

Minkowski spaces $\mathbb R^{2,1}$, $\mathbb R^{3,1}$, $\mathbb R^{5,1}$ and $\mathbb R^{9,1}$
are particularly interesting \cite{Del}, since the corresponding spin groups are  
$\SL(2,\mathbb F)$, with $\mathbb F =  \mathbb R,\, \mathbb C, \,\mathbb H,\, \mathbb O$,
respectively,
and the spinors form two-dimensional spaces (modules) over corresponding algebra $\mathbb F$.  
The present paper explores disks in $\mathbb R^2$, which corresponds to $\SL(2,\mathbb R)$.
Other cases will be discussed in a future paper.

\subsection{Geometric interpretation: circles as vectors in Minkowski space} 


Our interest in Minkowski space comes from a well-known fact \cite{jk-d,Ped70} 
that disks in $n$-dimensional  Euclidean space 
may be represented by the unit space-like vectors in $\mathbb R^{n+1;1}$, 
namely a disk of radius $r$ centered at $\mathbf x=x_1,... , x_n$ becomes a vector
$$
\hbox{Disk}(\mathbf x,r) \qquad   \longrightarrow \qquad
\mathbf v = 
\left[\begin{matrix}
                 \dot {\mathbf  x} \\
                 \beta \\
                 \gamma
\end{matrix}\right]
\ \equiv \ 
\left[\begin{matrix}
                 \mathbf x / r\\
                 1/r \\
                \frac{\mathbf x^2   -r^2 }{r}
\end{matrix}\right]
$$
where $|\mathbf v|^2=  -1$. The inner product of $\mathbb R^{n+1;1}$ has a geometric interpretation:
$$
\langle \mathbf v,\, \mathbf v' \rangle  = 
\begin{cases}
\ \quad   \cos \varphi                                  &\quad\hbox{if disks boundaries intersect}\\[3pt]
\ \dfrac{d^2-r_1^2-r_2^2}{r_1r_2  }         &\quad\hbox{in general}
\end{cases}
$$
In the case of 2-dimensional disks discussed in the previous section 
the map and associated notation are as follows
$$
   \hbox{Disks}\;(\mathbb R^2 )
                \ \longrightarrow\ \mathbb R^{3;1}: \quad
   \hbox{disk}((x,\,y), r) \   \longrightarrow \ 
  \left[\begin{matrix}
                 \dot  x \\
                \dot y \\
                 \beta \\
                 \gamma
  \end{matrix}\right]
\ \equiv \ 
\left[\begin{matrix}
                  x / r\\
                  y/r \\ 
                 1/r \\
                \frac{x^2 +y^2   - r^2 }{r}
\end{matrix}\right]
\ \leftrightarrow \ 
\frac{\dot x,\, \dot y}{\beta}
$$
where the last ``fraction '' is a simpler presentation of the Minkowski vector, 
since it contains the complete information 
(the value of $\gamma$ may be calculated from the other three components because of normalization).

A vector and its negative represent two disks, inner and outer, sharing the same circle as a boundary.
In particular, in the Apollonian Window,  the great circle centered at $(0,0)$  is a boundary of an outer disk 
and therefore its radius is negative one, and so is its curvature, $\beta=-1$.  
\\

The appearance of Pythagorean triples in the Apollonian Window $\mathcal A$ may 
be formally thought of as a corollary to Proposition \ref{thm:basic}.
Namely, for any two tangent circles in $\mathcal A$, since their inner product is $1$, we have:   
$$
     \left[\begin{matrix}
        \dot x_1\\
       \dot y_1\\
       \beta_1\\
       \gamma_1
      \end{matrix}\right]
\ \bowtie \
     \left[\begin{matrix}
        \dot x_2\\
        \dot y_2 \\
        \beta_2\\
        \gamma_2
      \end{matrix}\right]
\quad = \quad
       \left[\begin{matrix}
                     \beta_1 \dot x_2 - \beta_2 \dot x_1 \\
                     \beta_1 \dot y_2 - \beta_2 \dot y_1 \\
                    \beta_1+\beta_2
     \end{matrix}\right]
\equiv 
       \left[\begin{matrix}
                     a \\
                     b \\
                    c
     \end{matrix}\right]
$$
with $a^2+b^2=c^2$. This is equivalent to (\ref{eq:product}).
%
%
%
%
Clearly, for an arbitrary pair of tangent circles the resulting triangle does not have to be integral.

\subsection{Euclid parametrization as a spinor}

Here we recall the spinor interpretation of Euclid's parametrization of Pythagorean triples
following \cite{jk-c}. 
View  a Pythagorean triple   as a vector of Minkowski space $\mathbb R^{2,1}$.
As such it may be represented by traceless matrices on which the group $\SL(2,\mathbb R)$ 
acts by conjugation as the way of representing $\SO(2,1)$.
But being a null vector, satisfying $a^2+b^2-c^2=0$, 
it may be represented as a tensor product of a spinor with itself, conjugated, 
$u\otimes u^*$, where $u = [m,n]^T$.
\begin{equation}
\label{eq:mn}
\left[\begin{matrix}
- b &   a+c \\
a-c & b
\end{matrix}\right]
\ \  = \ \
2\;\left[\begin{matrix}
 m\\
n
\end{matrix}\right]
\otimes
\left[\begin{matrix}
  -n & m
\end{matrix}\right]
\end{equation}
In general the group $\SL(2,\mathbb R)$ serves as the symmetry group covering $\SO(2,1)$.
The numbers $m,n$ coincide with Euclid's parametrization.
In particular, Eq. (\ref{eq:mn}) is a tensor version of the standard form of the Euclids' relation: 
$$
              (a,b,c) = (m^2-n^2,\ 2mn, \ m^2+n^2)
$$
In the present paper we represent the spinors  
by complex numbers via this identification
 $$
     \left[\begin{matrix}
        m\\
        n
      \end{matrix}\right]
= m+n i
$$
This representation comes about as follows:
Multiply both sides of (\ref{eq:mn}) on right by the ``anti-diagonal'' matrix to get
$$
\left[\begin{matrix}
 a+c & -b \\
b & a-c 
\end{matrix}\right]
\ = \
2 \;\left[\begin{matrix}
m^2 & -mn \\
mn & -n^2 
\end{matrix}\right]
$$
Now it is a matter of projection along direction 
$\left[\begin{smallmatrix}1&\phantom{-}0\\ 0&-1\end{smallmatrix}\right]$ 
onto
$\hbox{span}\,\{
\left[\begin{smallmatrix}1&0\\0&1\end{smallmatrix}\right], 
\left[\begin{smallmatrix}0&1\\1&0\end{smallmatrix}\right], 
\left[\begin{smallmatrix}0&-1\\0&\phantom{-}0\end{smallmatrix}\right]
\}$ 
in the linear space of matrices 
$M_2(\mathbb R)$ to obtain  
$$
\left[\begin{matrix}
 a & -b \\
b & a 
\end{matrix}\right]
\ = \ 
\left[\begin{matrix}
m^2 -n^2& -2mn \\
2mn & m^2-n^2 
\end{matrix}\right]
\ \equiv \ 
\left[\begin{matrix}
m& -n \\
n & \phantom{-}m 
\end{matrix}\right]^2
$$
which is recognizably the standard representation of complex numbers, corresponding to 
\begin{equation}
\label{eq:c}
   a+b\,i \  = (m+n\,i)^2.
\end{equation}
This starting point (\ref{eq:mn}) of this derivation justifies calling 
Euclids' parameters a {\it spinor}, while the resulting and well-known (\ref{eq:c})
used in the proofs of the previous section conveniently utilize the  
algebraic structure of complex numbers.
\\
\\
{\bf Remark:}  The derivation of \eqref{eq:c} 
as the spinor property has a better algebraic representation in terms 
the split quaternions that happen to coincide 
with the Clifford algebra of the Minkowski space  
$\mathbb R^{2,1}$  (c.f., \cite{jk-k});
this will be addressed elsewhere. 

%
%
%
%

\section{Summary}
Every ordered pair of disks in a Euclidean plane gives rise to a spinor $u\in\mathbb R^2$, 
an element of 2-dimensional symplectic space, defined up to sign.  
Algebraically, the process may be described as follows: 
we represent disks by space-like unit vectors in the Minkowski space $M=\mathbb R^{3,1}$.
A certain product of such vectors lies in the null cone of a subspace
isomorphic to Minkowski space $\mathbb R^{2,1}\subset M$.
Such vectors may in turn be represented by a tensor square of a spinor,
an element of the spinor space associated to $\mathbb R^{2,1}$.


\def\maly{\!\!\!\!\!\!}

$$
\qquad\quad
\begin{tikzpicture}[baseline=-0.8ex]
    \matrix (m) [ matrix of math nodes,
                         row sep=2.5em,
                         column sep=3em,
                         text height=3.8ex, text depth=3ex] 
   {
\maly\maly   \mathbb R^{3;1}   \maly\maly
      & \quad\mathbb R^{3;1} \!\times\! \mathbb R^{3;1} \quad
                    & \quad \mathbb R^{2,1}_0 \quad   
                           &\quad\mathbb C \quad   \\
 \maly \maly   \begin{matrix}\hbox{disks} \\ \hbox{in plane} \end{matrix}\maly\maly
                    & \quad \begin{matrix}\hbox{tangent} \\ \hbox{disks} \end{matrix}  \quad     
                    & \quad \begin{matrix}\hbox{right}\\ \hbox{ triangles}\end{matrix}\quad
                        &\quad\hbox{spinors}\quad\\
     };
    \path[-stealth]
        (m-1-2) edge node[above] {$\Join$} (m-1-3)
        (m-1-3) edge node[above] {s}  (m-1-4)
                       (m-2-1) edge node[right] {}  (m-1-1)
                       (m-2-2) edge node[right] {}  (m-1-2)
                       (m-2-3) edge node[right] {}  (m-1-3)
                       (m-2-4) edge node[right] {}  (m-1-4);
%
\end{tikzpicture}   
$$

The inspiration for this construction comes from the presence of the 
Pythagorean triangles in the Apollonian Window, a special case of an integral disk packing. 
Algebraic interpretation of this construction lies in the Euclidean parametrization of 
Pythagorean triangles.
But the results do not rely on integrality, even if the integral packings are 
especially interesting from the number-theoretic point of view.

The theorems describing the  behavior of spinors in a Descartes configuration of disks 
are is the main content of the paper.  
Among the rather intriguing properties are their mutual relations 
and relations to the disks' curvatures.
The ``spinor fiber bundle" over an Apollonian disk packing
and its topological aspects will be discussed in  a subsequent paper.

\section*{Acknowledgments}

The author is grateful to Philip Feinsilver for his interest in this work and his valuable comments.


\end{document}